\documentclass[letterpaper,11pt,oneside,reqno]{amsart}

\usepackage[colorlinks=true,linkcolor=blue,citecolor=red]{hyperref}
\usepackage[alphabetic,nobysame]{amsrefs}

\usepackage{amsmath,amssymb,amsthm,amsfonts,mathtools}
\usepackage{graphicx,color}
\usepackage{upgreek}
\usepackage{esint}
\usepackage[mathscr]{euscript}
\allowdisplaybreaks
\numberwithin{equation}{section}

\usepackage{tikz}
\usetikzlibrary{shapes,arrows,positioning,decorations.markings}

\usepackage{ytableau}
\usepackage{array}
\usepackage{adjustbox}
\usepackage{enumerate}
\usepackage[margin=10pt]{caption}

\usepackage[DIV=12]{typearea}

\renewcommand{\le}{\leqslant}
\renewcommand{\ge}{\geqslant}

\usepackage{aliascnt}

\newaliascnt{lemma}{proposition}
\newtheorem{lemma}[lemma]{Lemma}
\aliascntresetthe{lemma}
\newaliascnt{corollary}{proposition}
\newtheorem{corollary}[corollary]{Corollary}
\aliascntresetthe{corollary}
\newaliascnt{theorem}{proposition}
\newtheorem{theorem}[theorem]{Theorem}
\aliascntresetthe{theorem}
\newaliascnt{conjecture}{proposition}

\aliascntresetthe{conjecture}
\newaliascnt{claim}{proposition}

\aliascntresetthe{claim}
\theoremstyle{definition}
\newaliascnt{definition}{proposition}
\newtheorem{definition}[definition]{Definition}
\aliascntresetthe{definition}
\newaliascnt{remark}{proposition}
\newtheorem{remark}[remark]{Remark}
\aliascntresetthe{remark}
\newaliascnt{example}{proposition}

\aliascntresetthe{example}
\usepackage{cleveref}
\crefname{proposition}{proposition}{propositions}
\Crefname{proposition}{Proposition}{Propositions}
\crefname{lemma}{lemma}{lemmas}
\Crefname{lemma}{Lemma}{Lemmas}
\crefname{corollary}{corollary}{corollaries}
\Crefname{corollary}{Corollary}{Corollaries}
\crefname{theorem}{theorem}{theorems}
\Crefname{theorem}{Theorem}{Theorems}
\crefname{conjecture}{conjecture}{conjectures}
\Crefname{conjecture}{Conjecture}{Conjectures}
\crefname{claim}{claim}{claims}
\Crefname{claim}{Claim}{Claims}
\crefname{definition}{definition}{definitions}
\Crefname{definition}{Definition}{Definitions}
\crefname{remark}{remark}{remarks}
\Crefname{remark}{Remark}{Remarks}
\crefname{example}{example}{examples}
\Crefname{example}{Example}{Examples}

\begin{document}

\title{Random permutations from $q$-Demazure products}

\author{Mikhail Tikhonov}

\date{}

\begin{abstract}
 We study the $q$-deformation of the Demazure product model from \cite{GrothendieckShenanigans2024}. Consider the longest element $w_0$ in $S_n$ decomposed into simple transpositions.

\begin{equation*}
    w_0 = s_{n-1} (s_{n-2} s_{n-1}) \cdots (s_1 s_2 \cdots s_{n-1})
\end{equation*}
Then, independently, delete each transposition with probability $1-p$ (equivalently, keep it with probability $p$) and apply the $q$-Demazure product to the remaining ones. We show that the law of the resulting permutation converges as $n \to \infty$ to a \emph{deterministic limiting measure}, known as a \emph{permuton}. Moreover, this limiting permuton coincides with the special case ($q=0$) studied in \cite{GrothendieckShenanigans2024} for adjusted probability $p'=\frac{p(1-q)}{1-qp}$. This resolves \cite{GrothendieckShenanigans2024}*{Conjecture~1.13} and identifies the limiting permuton explicitly.
\end{abstract}

\maketitle

\section{Introduction}

\subsection{Overview}

The symmetric group $S_n$ is generated by simple transpositions
$s_1, \dots, s_{n-1}$, where $s_i$ swaps positions $i$ and $i+1$.
We consider the following deformation of the group product.

\begin{definition}[$q$-Demazure product]
\label{def:q_demazure}
Fix $q \in [0,1]$.
The \emph{$q$-Demazure product} of a simple transposition $s_i$ and a permutation
$\sigma \in S_n$ is the random permutation
\[
  s_i \cdot_q \sigma \;:=\;
  \begin{cases}
    s_i\sigma, & \text{if } \ell(s_i\sigma) > \ell(\sigma),\\[6pt]
    \begin{cases}
      s_i\sigma & \text{with probability } q,\\
      \sigma    & \text{with probability } 1-q,
    \end{cases}
    & \text{if } \ell(s_i\sigma) < \ell(\sigma),
  \end{cases}
\]
where $\ell(\sigma) = \#\{(a,b) : a < b,\, \sigma(a) > \sigma(b)\}$ is the number of
inversions of $\sigma$.
The $q$-Demazure product of a word $s_{i_1}\cdot_q \cdots \cdot_q s_{i_k}$ is defined by applying the rule
left-to-right with independent randomness at each step.
At $q=0$ this reduces to the \emph{Demazure product}; at $q=1$ it is the ordinary
group multiplication in $S_n$.
\end{definition}

Consider the longest element $w_0 \in S_n$ and
its standard reduced decomposition into simple transpositions:
\begin{equation*}
	w_0 = s_{n-1} (s_{n-2} s_{n-1}) \cdots (s_1 s_2 \cdots s_{n-1}).
\end{equation*}
We independently delete each letter with probability $1 - p$
(equivalently, keep it with probability~$p$) and apply the
$q$-Demazure product to the remaining subword, obtaining a
random permutation $\sigma_n \in S_n$.
We address the question: what is the large-$n$ behavior of~$\sigma_n$?

The two boundary cases $q=0$ and $q=1$ have been studied previously.
When $q = 0$ (the ordinary Demazure product),
$\sigma_n$ converges to a deterministic permuton $\mu_p$
\cite{GrothendieckShenanigans2024};
subsequently, Defant \cite{defant2025permutons} extended this theory to more general classes of reduced decompositions.
When $q = 1$ (ordinary group multiplication),
$\sigma_n$ is the product of a random subword of $w_0$;
the expected number of inversions in this regime was computed in \cite{Defant2024}.
Thus \cref{thm:intro_main} addresses the intermediate regime $q \in (0,1)$,
whose convergence to a permuton was conjectured in
\cite{GrothendieckShenanigans2024}*{Conjecture~1.13}.
Our result complements
\cite{defant2025permutons}: Defant varies the reduced decomposition at $q=0$, whereas we fix the staircase decomposition of $w_0$
and vary~$q$.

\begin{remark}[Terminology: Hecke vs.\ $q$-Demazure]
  \label{rem:terminology}
  The combined keep-or-delete-then-Demazure step at each position
  is precisely the left multiplication $T_\sigma \mapsto R_k(p) \cdot T_\sigma$
  in the Hecke algebra $H_n(q)$
  with relation $(T_i + q)(T_i - 1) = 0$,
  where $R_k(p) := p\,T_{s_k} + (1-p)$
  \cite{galashin2020symmetries}*{Proposition~2.3}.
  In particular, the parameter $q$ enters the distribution not only as a structural deformation of the algebra but also as a probabilistic weight: at a reducing step, the crossing probability is $qp$ rather than~$p$.
\end{remark}

The product of $R_k(p)$ operators over the standard reduced word of~$w_0$ is the homogeneous specialization $x_1 = \cdots = x_N = p$ of the \emph{Yang--Baxter basis} element $Y^{w_0}(x_1,\dots,x_N)$ introduced by Lascoux, Leclerc, and Thibon \cite{lascoux1997flag}.
Galashin \cite{galashin2020symmetries} showed that the coefficient of~$T_\pi$ in $Y^w(x_1,\dots,x_N)$ equals the probability of observing color permutation~$\pi$ in the stochastic colored six-vertex model on the wiring diagram of~$w$ with inhomogeneous rapidities $x_1,\dots,x_N$.
This identification is closely related to the color-position symmetry of Borodin--Bufetov \cite{BorodinBufetov2021ColorPosition} and the interacting-particle perspective of Bufetov \cite{bufetov2020interacting}.

The stochastic colored six-vertex model underlying our construction has rich algebraic symmetries.
Borodin, Gorin, and Wheeler \cite{borodin2019shift} discovered \emph{shift-invariance}: certain distributional identities for height functions that imply analogous statements for the KPZ equation, directed polymers, and last-passage percolation.
Galashin \cite{galashin2020symmetries} identified a more fundamental symmetry --- \emph{flip-invariance} --- and showed that shift-invariance is a special case (a composition of two flips); these identities hold for inhomogeneous rapidities and arbitrary skew domains.
The colored vertex weights we define in \cref{sec:model} are a homogeneous specialization of those appearing in these works, restricted to a triangular domain with rainbow boundary.

Our proof strategy, however, does not use these symmetries; see \cref{subsec:methods}.

\subsection{Results}

We state the main result in terms of the height function of a permutation.

\begin{definition}[Height function of a permutation]
\label{def:intro_h_perm}
For $\sigma \in S_n$ and $(x, y) \in [0,1]^2$, the \emph{height function} of $\sigma$ is
\[
  H_\sigma(x,y) \;:=\; \frac{1}{n}\,\#\bigl\{\, i \le \lfloor nx \rfloor : \sigma(i) > \lfloor ny \rfloor \,\bigr\}.
\]
Equivalently, $H_\sigma(x,y)$ is the normalized number of points of $\sigma$
(viewed as a subset of $\{1,\dots,n\}^2$) in the upper-left rectangle $[1, \lfloor nx\rfloor] \times (\lfloor ny\rfloor, n]$.
\end{definition}

\begin{theorem}[Main result; see \Cref{thm:main}]
\label{thm:intro_main}
Fix $q \in (0,1)$ and $p \in (0,1)$.
Let $\sigma_n \in S_n$ be the random permutation obtained by the $q$-Demazure product
with deletion probability $1-p$ applied to the standard reduced decomposition of~$w_0$.
Then:
\begin{enumerate}[\upshape(i)]
  \item The sequence of permutons $\mu_{\sigma_n}$ converges weakly, in probability,
  to a deterministic permuton $\mu_{p,q}$.
  \item $\mu_{p,q}$ coincides with the limiting permuton $\mu_{p'}$ from
  \cite{GrothendieckShenanigans2024} for the Demazure product ($q=0$) with adjusted parameter
  \[
    p' \;=\; \frac{p(1-q)}{1-qp}.
  \]
  \item The explicit height function of $\mu_{p,q}$ is given in \cref{eq:h_refl,eq:height_rescaling}.
\end{enumerate}
\end{theorem}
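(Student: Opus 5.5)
I would realize $\sigma_n$ as the color permutation of a stochastic colored six-vertex model on the triangular (staircase) wiring diagram of $w_0$, following \cref{rem:terminology}. At a crossing of two colored lines: if the lines have not yet crossed (the product increases length), they cross with probability $p$ and go straight with probability $1-p$. If they have already crossed once (the product would decrease length), they cross back with probability $qp$, and otherwise stay.

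The key structural step is a \emph{color-merging (projection) property}. Fix a threshold $k$ and recolor every line $\le k$ as ``low'' and every line $>k$ as ``high''. The crossing rule depends only on the relative order of the two colors, so the two-color projection is itself a Markov chain: an interacting particle system (a stochastic six-vertex / TASEP-type model) on the staircase. The height function $H_{\sigma_n}(x,y)$ is exactly a height function of this two-color model. At $q=0$, \cite{GrothendieckShenanigans2024} analyzes this two-color model and obtains a hydrodynamic limit. The plan is therefore:
\begin{enumerate}[(1)]
\item write the two-color projection explicitly;
\item compare it to the $q=0$ projection with parameter $p'$;
\item invoke the $q=0$ hydrodynamic limit and its concentration.
\end{enumerate}

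For step (2), consider a vertex where a low and a high line meet in the ``wrong'' order (already crossed). With $q>0$, an excursion can occur: the lines cross back with probability $qp$, and later meetings of the same pair are again governed by the rules. Along a single diagonal sequence of vertices, the net effect of a geometric run of back-and-forth crossings should renormalize the effective forward-crossing probability. Summing the series $p(1-q)\sum_j (qp)^j$ over the alternating outcomes gives exactly $p'=p(1-q)/(1-qp)$. Concretely, I would try to prove an exact distributional identity at the level of the two-color model. One route is a local transfer-matrix/Yang--Baxter-free computation showing that each row of the staircase, as a transfer operator on two-color configurations, acts identically to the $q=0$, parameter-$p'$ row operator after a suitable coupling. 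Failing an exact identity, I would aim to show that both models have the same local stationary measures (product Bernoulli with identical current-density relation $j(\rho)$). Hydrodynamics then give the same entropy solution of the conservation law $\partial_t\rho+\partial_x j(\rho)=0$ with the same boundary data, so the limits coincide.

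Step (3) then gives convergence in probability of $H_{\sigma_n}(x,y)$ for each fixed $(x,y)$ to the explicit function of \cref{eq:h_refl,eq:height_rescaling}. Monotonicity of height functions in $x$ and $y$, together with their Lipschitz bounds, upgrades this to uniform convergence, which is equivalent to weak convergence of permutons, yielding (i) and (iii). I expect the main obstacle to be step (2): justifying that the $q$-back-crossings, which are genuinely nonlocal in time for a given pair of lines, collapse to the simple parameter change $p\mapsto p'$. I would first try the flux-function comparison, computing $j(\rho)$ for the two-color row chain under Bernoulli product measure and checking that it equals the $q=0$ flux with $p'$.
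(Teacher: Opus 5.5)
There is a genuine gap, and it lies in the passage from your step (2) to step (3).

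\textbf{The exact identity fails.} After color-forgetting, your model is the stochastic six-vertex model with $(b_1,b_2)=(p,qp)$, i.e.\ the weights \eqref{eq:uncolored6Vweights}. The $q=0$ model at $p'$ has $(b_1,b_2)=(p',0)$. These differ already at a single vertex. For $n=2$ one gets $\sigma_2=s_1$ with probability $p$ in one model and $p'<p$ in the other. The geometric-series heuristic does not correspond to any actual computation.

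\textbf{What survives is your fallback, but it is not enough.} Under product Bernoulli$(\rho)$ input, a row has horizontal density $\varphi(\rho)=\kappa\rho/((\kappa-1)\rho+1)$ with $\kappa=(1-qp)/(1-p)$. Matching $\kappa=1/(1-p')$ yields $p'$, and this is exactly the paper's key observation. However, equal fluxes do not let you import the $q=0$ limit, because that limit is a theorem about a different particle system. You would need a hydrodynamic limit for the $q>0$ system on the staircase itself. The available theorems (Aggarwal on the cylinder, BCG on the quadrant with step data) concern systems with ordinary vertices everywhere.

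\textbf{The obstruction is the reflecting diagonal.} There the vertex is deterministic. An incoming horizontal arrow is forced up; in its absence, the vertical arrow coming from the frozen side is forced to turn right. At $q=0$ this coincides with an ordinary vertex, since $w_{p,0}(1,0;1,0)=0$. For $q>0$ the bulk rule would let that arrow continue upward with probability $qp$. So the staircase system is not a standard S6V model. Your principle ``same flux and same boundary data give the same entropy solution'' is therefore unproven here. You would also have to identify which boundary condition the deterministic vertex induces, which is delicate for scalar conservation laws.

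\textbf{How the paper closes this.} The paper avoids identifying the boundary condition directly:
\begin{enumerate}[(a)]
\item It embeds the trapezoid at threshold $X$ into the cylinder, adding a fully packed frozen block.
\item It builds a monotone coupling (\cref{lem:monotone_coupling}), using bulk attractiveness plus a direct check at the boundary vertex. This sandwiches the active height between the double-step cylinder system (handled by Aggarwal's theorem) and the step-initial quadrant system (handled by BCG).
\item It solves the double-step problem explicitly: a rarefaction fan plus a shock along $\mathcal{V}$ (\cref{thm:entropy_solution}).
\item It uses the identity $h_{BCG}(\mathcal{V}(v),v)=v+1-2\lambda-\mathcal{V}(v)$ to show the two bounds coincide to the left of the shock. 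Mass counting then forces density $1$ to its right.
\end{enumerate}
Your plan needs this, or some other way of controlling the reflecting boundary. Your final step, from pointwise height-function convergence to permuton convergence, is fine.
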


The adjusted parameter $p'$ arises from matching the six-vertex model parameter
$\kappa = (1-qp)/(1-p)$ between the two models; see the proof of \cref{thm:main}.
Note that $p' \to p$ as $q \to 0$, recovering the Demazure case,
and at $p = 1$ the input word is reduced, so no reducing steps arise and $\sigma_n = w_0$ for all~$q$.

\subsection{Strategy of proof}
\label{subsec:methods}

The key observation is that the hydrodynamic limit of the stochastic six-vertex model depends on the vertex weights only through the parameter $\kappa = (1-qp)/(1-p)$, which coincides with the parameter of the $q=0$ model at $p' = p(1-q)/(1-qp)$.
The challenge is that the model lives on a triangular domain with a reflecting boundary, for which no direct hydrodynamic limit theorem is available.
We overcome this via a monotone sandwich argument.

In \cref{sec:model}, we encode the $q$-Demazure random permutation as a \emph{colored stochastic
six-vertex model} (S6V) on a triangular domain $\delta \subset \mathbb{Z}^2$
with rainbow boundary conditions, adapting the framework of
\cite{GrothendieckShenanigans2024}*{Section~2} to $q > 0$.
The colored vertex weights $L_{p,q}$ (\cref{eq:colored6Vweights}) extend the $q = 0$ weights of \cite{GrothendieckShenanigans2024} by allowing length-decreasing crossings with probability~$qp$.
By the color-forgetting property, for each color threshold $X$, the
colored height function reduces to that of an \emph{uncolored} S6V model with
weights $w_{p,q}$ (\cref{eq:uncolored6Vweights}), evolving on $\delta$
with a reflecting boundary.

In \cref{sec:hydro}, we analyze the stochastic six-vertex model on the discrete cylinder
$\mathfrak{C}_{M,L}$ with double-step initial data.
By the hydrodynamic limit theorem of Aggarwal \cite{aggarwal2020limit},
the particle density converges to the entropy solution of a conservation
law with rational flux $\varphi(z) = \kappa z / ((\kappa - 1)z + 1)$.
We construct this entropy solution
explicitly: it consists of the BCG rarefaction fan \cite{BCG6V} joined
to a shock wave propagating along a computable curve.

In \cref{sec:main}, we construct a monotone coupling
to sandwich the active height function of the
reflective system between the S6V on the cylinder with double-step initial
data (lower bound) and the step-initial S6V on the quadrant (upper bound).
Both bounds converge to the same deterministic limit, which we identify
with the permuton height function from \cite{GrothendieckShenanigans2024}
under the parameter substitution $p \mapsto p'$.

\subsection{Acknowledgments}
We are grateful to Amol Aggarwal, Vadim Gorin, Greta Panova, and Leonid Petrov for helpful discussions. 
The work was partially supported by NSF grant DMS-2153869.

\section{From permutations to the colored stochastic six-vertex model}

\label{sec:model}

\subsection{From permutations to vertex models}

Let $\delta = \{(S,Y) : 1 \le Y < S \le n\}$ be the triangular region on the square grid.
We introduce the colored vertex weights $L_{p,q}$ as follows. These encode the $q$-Demazure product rule: when $i > j$ (non-reducing step), the straight configuration has probability $p$; when $j > i$ (reducing step), it has probability $qp$.

Since only the relative order of the two incoming colors matters, every non-trivial configuration can be depicted using two representative colors. In the pictures below we use {\color{red}red} for the larger incoming color and black for the smaller.

\begin{equation}
\label{eq:colored6Vweights}
  \begin{gathered}
L_{p,q}\!\left(\,
\begin{tikzpicture}[baseline=-3,scale=.7,very thick]
    \draw[fill] (0,0) circle [radius=0.05];
    \draw[black] (0,-0.05) -- (0,-0.5); \draw[black] (0,0.05) -- (0,0.5);
    \draw[red]   (-0.05,0) -- (-0.5,0); \draw[red]   (0.05,0) -- (0.5,0);
\end{tikzpicture}
\,\right) = qp,
\qquad
L_{p,q}\!\left(\,
\begin{tikzpicture}[baseline=-3,scale=.7,very thick]
    \draw[fill] (0,0) circle [radius=0.05];
    \draw[black] (0,-0.05) -- (0,-0.5); \draw[black] (0.05,0) -- (0.5,0);
    \draw[red]   (-0.05,0) -- (-0.5,0); \draw[red]   (0,0.05) -- (0,0.5);
\end{tikzpicture}
\,\right) = 1-qp,
\qquad (j > i), \\[10pt]
L_{p,q}\!\left(\,
\begin{tikzpicture}[baseline=-3,scale=.7,very thick]
    \draw[fill] (0,0) circle [radius=0.05];
    \draw[red]   (0,-0.05) -- (0,-0.5); \draw[red]   (0,0.05) -- (0,0.5);
    \draw[black] (-0.05,0) -- (-0.5,0); \draw[black] (0.05,0) -- (0.5,0);
\end{tikzpicture}
\,\right) = p,
\qquad
L_{p,q}\!\left(\,
\begin{tikzpicture}[baseline=-3,scale=.7,very thick]
    \draw[fill] (0,0) circle [radius=0.05];
    \draw[red]   (0,-0.05) -- (0,-0.5); \draw[red]   (0.05,0) -- (0.5,0);
    \draw[black] (-0.05,0) -- (-0.5,0); \draw[black] (0,0.05) -- (0,0.5);
\end{tikzpicture}
\,\right) = 1-p,
\qquad (i > j).
\end{gathered}
\end{equation}
When $i = j$, only the straight configuration is possible and $L_{p,q}(i,i;i,i) = 1$.

Place a stochastic vertex with the colored weight $L_{p,q}$ from \eqref{eq:colored6Vweights} at each vertex $(S,Y) \in \delta$.
Along the diagonal $Y = S$, the domain $\delta$ has no vertices: the rightmost vertex in row~$S$ is at $Y = S - 1$. An arrow exiting horizontally from $(S, S-1)$ reaches the diagonal, where it is forced to turn upward and becomes the bottom input of $(S+1, S)$. These diagonal nodes act as a \emph{reflecting boundary} for the model.
We equip the model with \emph{rainbow} boundary conditions: color $r$ enters from the left at row $S = n + 1 - r$ for $r = 1, \dots, n$ (see \Cref{fig:rainbow_initial}).

\begin{figure}
\centering
\includegraphics[width=0.85\textwidth]{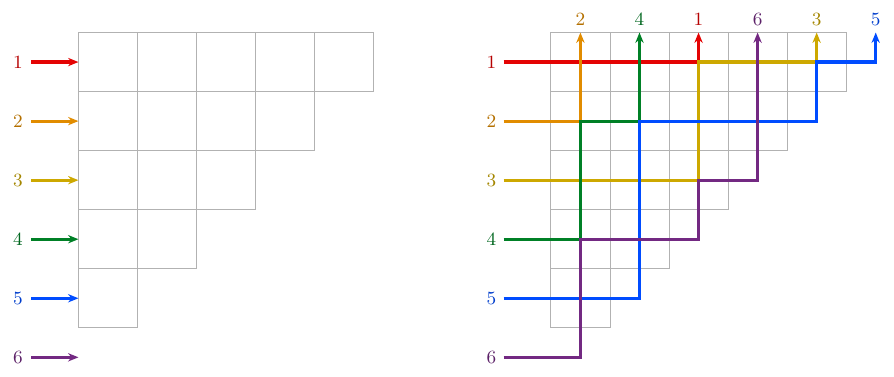}
  \caption{The colored stochastic six-vertex model on the triangular domain $\delta$ with $n=6$.
  \emph{Left:} Rainbow initial conditions — arrows of colors $1,2,\dots,6$ enter from the left boundary.
  \emph{Right:} A sample configuration; each vertex independently applies the weight $L_{p,q}$ from \eqref{eq:colored6Vweights}.
  The permutation is read from the colors of arrows at the top row $S = n$.}
  \label{fig:rainbow_initial}
\end{figure}

A random configuration is sampled row-by-row, from $S = 2$ to $S = n$, processing each row left-to-right (increasing $Y$). At each vertex, the bottom and left inputs are already determined, so the output can be sampled independently according to $L_{p,q}$. Reading the colors of vertical arrows at the top row $S = n$, we arrive at a random permutation.

\subsection{Pipe dream interpretation}
\label{subsec:pipedream}

The random configuration is equivalently a \emph{pipe dream} (or \emph{RC-graph}), a standard object in Schubert calculus. A pipe dream is a tiling of $\delta$ where each box $(S,Y)$ carries one of two tiles:
\begin{itemize}
  \item a \emph{cross}
    \raisebox{-3pt}{\begin{tikzpicture}[scale=0.45, line width=0.8pt]
      \draw[gray!40] (0,0) rectangle (1,1);
      \draw (0.5,0) -- (0.5,1);
      \draw (0,0.5) -- (1,0.5);
    \end{tikzpicture}}
    — the two pipes pass straight through each other, corresponding to the straight configuration of $L_{p,q}$;
  \item a \emph{bump}
    \raisebox{-3pt}{\begin{tikzpicture}[scale=0.45, line width=0.8pt]
      \draw[gray!40] (0,0) rectangle (1,1);
      \draw (0.5,0) to[out=90,in=180] (1,0.5);
      \draw (0,0.5) to[out=0,in=-90] (0.5,1);
    \end{tikzpicture}}
    — the two pipes turn and avoid crossing, corresponding to the turned configuration.
\end{itemize}
Under the rainbow initial conditions, pipe $r$ enters from the left at row $S = n+1-r$. Each pipe follows the tiling: it passes straight through every cross it meets and turns at every bump. The color exiting at the top row $S = n$ at column $Y$ is $\sigma(Y)$.

Stripping away colors, a pipe dream is simply a $\{$cross, bump$\}$-filling of $\delta$. The crosses, read row-by-row from $S=2$ to $S=n$ (left-to-right within each row), spell out a word $w = s_{m_1} s_{m_2} \cdots s_{m_k}$ in the simple transpositions of $S_n$, where the transposition at box $(S,Y)$ is $s_{n+Y-S}$. The permutation $\sigma$ encoded by the tiling is recovered by the \emph{Demazure product}. Starting from $\sigma = \mathrm{id}$, apply each $s_{m_t}$ in order. If $\ell(s_{m_t} \sigma) > \ell(\sigma)$ (a non-reducing step), update $\sigma \leftarrow s_{m_t} \sigma$; otherwise skip. This gives $\sigma = \partial_w(\mathrm{id})$.

The $q$-reduction \cite{GrothendieckShenanigans2024} replaces the deterministic skip by a coin flip: at a reducing cross ($\ell(s_{m_t}\sigma) < \ell(\sigma)$), apply $s_{m_t}$ with probability $q$ and skip with probability $1-q$. This recovers the $q$-Demazure product rule of \cref{def:q_demazure}. In the tiling picture, a reducing cross at $(S,Y)$ occurs when the two pipes meeting there have already crossed elsewhere in the diagram. Each such excess crossing contributes an independent factor of~$q$ to the weight of the configuration.

\begin{definition}[Colored height function]
\label{def:colored_height}
For a color threshold $c \in \{0, 1, \dots, n\}$, a row $S$, and a column $Y$, define
\[
  H(c;\, S,\, Y)
  \;:=\;
  \#\bigl\{\, Y' \ge Y : \text{the vertical arrow at column } Y',\, \text{row } S
  \text{ has color } \le c \,\bigr\}.
\]
At the top row $S = n$, this is related to the permutation height function by
$H(c;\, n,\, Y) = c - Y + 1 + n \cdot H_\sigma\!\bigl(\tfrac{Y-1}{n},\, \tfrac{c}{n}\bigr),$
since $H(c;\, n,\, Y)$ counts colors $\le c$ at positions $Y' \ge Y$, while $H_\sigma$ counts positions $i \le \lfloor nx \rfloor$ with $\sigma(i) > \lfloor ny \rfloor$.
\end{definition}

Fix an integer parameter $1 \leq X \leq n$. We erase all pipes of colors $>X$ and forget the color codes of all other arrows.
Since the colored weights \eqref{eq:colored6Vweights} depend only on the relative order of the two incoming colors, the resulting model evolves as an uncolored stochastic six-vertex model with the following weights.
Here $i, j, k, l \in \{0,1\}$ are the \emph{edge occupation numbers} of the four
edges of a vertex: $i$ (bottom, incoming), $j$ (left, incoming),
$k$ (top, outgoing), $l$ (right, outgoing), with $1$ indicating an arrow is present.
The weights are nonzero only when $i + j = k + l$ (arrow conservation):

\begin{equation*}
w_{p,q}(i,j;k,l)
=
w_{p,q}
\bigl(
	\begin{tikzpicture}[baseline=-3,scale=.7,very thick]
			\draw[fill] (0,0) circle [radius=0.025];
			\draw (0.5,0) -- (0.05,0);
			\draw (-0.5,0) -- (-0.05,0);
			\draw (0,0.05) -- (0, 0.5);
			\draw (0,-0.05) -- (0,-0.5);
			\node at (.25,-.4) {\scriptsize $i$};
			\node at (-.7,0) {\scriptsize $j$};
			\node at (-.25,.4) {\scriptsize $k$};
			\node at (.7,0) {\scriptsize $l$};
\end{tikzpicture}
\bigr),
\end{equation*}

\begin{equation}
w_{p,q}
\bigl(
	\begin{tikzpicture}[baseline=-3,scale=.7,very thick]
			\draw[fill] (0,0) circle [radius=0.05];
			\draw[thick] (0.5,0) -- (0.05,0);
			\draw[thick] (-0.5,0) -- (-0.05,0);
			\draw[dashed] (0,0.05) -- (0, 0.5);
			\draw[dashed] (0,-0.05) -- (0,-0.5);
\end{tikzpicture}
\bigr)
= p, \quad w_{p,q}
\bigl(
	\begin{tikzpicture}[baseline=-3,scale=.7,very thick]
			\draw[fill] (0,0) circle [radius=0.05];
			\draw[thick] (0,0.05) -- (0, 0.5);
			\draw[thick] (0,-0.05) -- (0,-0.5);
			\draw[dashed] (0.5,0) -- (0.05,0);
			\draw[dashed] (-0.5,0) -- (-0.05,0);
\end{tikzpicture}
\bigr) = qp, \quad w_{p,q}
\bigl(
	\begin{tikzpicture}[baseline=-3,scale=.7,very thick]
			\draw[fill] (0,0) circle [radius=0.05];
			\draw[dashed] (0,0.05) -- (0, 0.5);
			\draw[thick] (0,-0.05) -- (0,-0.5);
			\draw[thick] (0.5,0) -- (0.05,0);
			\draw[dashed] (-0.5,0) -- (-0.05,0);
	\end{tikzpicture}
\bigr) = 1 - qp, \quad w_{p,q}
\bigl(
	\begin{tikzpicture}[baseline=-3,scale=.7,very thick]
			\draw[fill] (0,0) circle [radius=0.05];
			\draw[thick] (0,0.05) -- (0, 0.5);
			\draw[dashed] (0,-0.05) -- (0,-0.5);
			\draw[dashed] (0.5,0) -- (0.05,0);
			\draw[thick] (-0.5,0) -- (-0.05,0);
	\end{tikzpicture}
\bigr) = 1 - p.
\label{eq:uncolored6Vweights}
\end{equation}

Consider the uncolored height function, defined as
\begin{equation}
\label{eq:uncolored_height}
    H_X(S,Y) \;:=\; \#\bigl\{\, Y' \ge Y : \text{there is a vertical arrow at column } Y',\, \text{row } S \,\bigr\}.
\end{equation}

Our goal is to understand the asymptotics of the uncolored height function $H_X(\lfloor{n s}\rfloor,\lfloor{n y}\rfloor)$,
where $(\lfloor{n s}\rfloor,\lfloor{n y}\rfloor)$ are the coordinates inside the region $\delta$.
In \cref{sec:coupling} we couple $H_X$ monotonically between two S6V models with known limit shapes; in \cref{sec:permuton} we identify the sandwich limit as the permuton from \cite{GrothendieckShenanigans2024} with adjusted parameter $p'$.

\section{Hydrodynamics on the cylinder}
\label{sec:hydro}

We set up the stochastic six-vertex model on the cylinder and compute the limit shape for double-step initial data, using the hydrodynamic limit theorem of \cite{aggarwal2020limit}.

\subsection{Model Setup}
Let $M$ be a positive integer and set $L = \lfloor \lambda M \rfloor$ for a fixed parameter $\lambda \in (0, \frac12]$. In the application to our model, $\lambda = x/(1+x)$ for $x \in (0,1)$; see \cref{eq:lambda_def}.
Define the discrete circle (one-torus) $\mathfrak{T}_M = \mathbb{Z} / M\mathbb{Z}$
and the discrete cylinder $\mathfrak{C}_{M,L} = \mathfrak{T}_M \times \{1, 2, \dots, L\}.$
We view the cylinder $\mathfrak{C}_{M,L}$ as a graph, where the vertices $(u_1, v_1), (u_2,v_2) \in \mathfrak{C}_{M,L}$ are connected if $(u_1-u_2, v_1-v_2) \in \{(-1,0), (1,0), (0,-1), (0,1)\}$.

With each vertex $\mathfrak{v} \in \mathfrak{C}_{M,L}$ we associate an arrow configuration $(i_1, j_1; i_2, j_2)$ and assign the stochastic six-vertex weight $w_{p,q}(i_1, j_1; i_2, j_2)$ according to \cref{eq:uncolored6Vweights}.
Arrow configurations are consistent: inputs at each vertex are outputs of its neighbors.

Finally, we equip the cylinder with initial data $\psi^{(M)}(i) \in \{0,1\}$. The upper end of the cylinder has \emph{free boundary conditions}.

We are interested in the \emph{thermodynamic limit} of this model, i.e. as $M \to \infty$.
Let $\nu_j(i)$ be an indicator for the event that there is a vertical arrow from $(i,j)$ to $(i,j+1)$.
In particular, the boundary configuration translates to $\nu_0(u) = \psi^{(M)} (u)$ for each $u \in \mathfrak{T}_M$.

Let \[\kappa  = \frac{1 - q p}{1 - p} > 1\] and
define the \emph{flux} function $\varphi: [0,1] \to [0,1]$ by
\begin{equation}
  \label{eq:flux_def}
  \varphi (z)  := \frac{\kappa z}{(\kappa - 1)z + 1}.
\end{equation}

\subsection{Step initial data on the quadrant}
The standard stochastic six-vertex model on the quadrant with step initial
data was studied in \cite{BCG6V}. Our formulas are obtained from those of \cite{BCG6V} by reflecting the spatial coordinate and complementing the height function. We denote the resulting rarefaction profile
by $G_{BCG}$; its explicit formula is recorded in \eqref{eq:gbcg_def} below.
In \cref{sec:coupling}, we use this profile as an upper bound for the active
particles of the reflective system.

\subsection{Double-step initial data}
We now introduce \emph{double-step initial data}.
Define the function $\Psi: \mathbb{T} \to [0,1]$ by
\begin{equation}
  \label{eq:initial_conditions_double_step}
  \Psi (u) = 1 - \mathbf{1}_{0 \leq u  \leq 1 - 2 \lambda}.
\end{equation}
For each $M \geq 1$, denote the corresponding boundary conditions on $\mathfrak{C}_{M,L}$ by $\psi^{(M)} (i)$.
We will assume they satisfy the following property:
\begin{equation}
  \lim_{M \to \infty} \sup_{0 \leq u_1 \leq u_2 \leq 1} \left| \frac{1}{M} \sum_{j=\lfloor u_1 M \rfloor}^{\lfloor u_2 M \rfloor} \psi^{(M)} (j) - \int_{u_1}^{u_2} \Psi(u) \,du \right| = 0.
\end{equation}

Under this assumption, the following theorem (conjectured by \cite{GwaSpohn1992}, proven by Aggarwal in \cite{aggarwal2020limit}) identifies the deterministic hydrodynamic profile $G(u,v)$ governing macroscopic space-time empirical densities on $\mathbb{T} \times [0, \lambda]$.

\begin{theorem}[\cite{aggarwal2020limit}, Theorem 1.1]
  Fix $\varepsilon>0$ and let $G(u,v)$ be the entropy solution to the conservation law
  \begin{equation}
    \label{eq:conservation_pde}
    \frac{\partial}{\partial v} G(u,v) + \frac{\partial}{\partial u} \left( \varphi(G(u,v))\right) = 0,
  \end{equation}
  on the cylinder $\mathbb{T} \times [0, \lambda]$, with initial data given by $G(u,0) = \Psi(u)$ and the flux $\varphi(\cdot)$ defined as in \cref{eq:flux_def}.
  For each $M \geq 1$, let $\nu^{(M)} = (\nu_v (u))$ be the particle configuration associated to the sampled stochastic six-vertex configuration on the cylinder $\mathfrak{C}_{M,L}$ described above.
  Then,
  \begin{equation}
    \lim_{M \to \infty} \mathbb{P} \left[ \max_{\substack{0 \leq U_1 \leq U_2 < M \\ 0 \leq V_1 \leq V_2 \leq L}} \left|\frac{1}{M^2}\sum_{j=V_1}^{V_2} \sum_{i=U_1}^{U_2} \nu_{j} (i) - \int_{V_1/M}^{V_2/M}\int_{U_1/M}^{U_2/M} G(u,v) \,du \,dv \right| > \varepsilon \right] = 0.
  \end{equation}

\end{theorem}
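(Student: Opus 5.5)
The plan is not to reprove the hydrodynamic limit. Instead I will check that our cylinder model is literally an instance of the model in \cite{aggarwal2020limit}*{Theorem~1.1}, with matching parameters and flux, and then quote that theorem. First I rewrite the weights \eqref{eq:uncolored6Vweights} in Aggarwal's convention. Aggarwal's model has two parameters $0<\delta_1<\delta_2<1$:
\begin{itemize}
  \item $\delta_1$ is the probability that a vertical arrow entering alone continues straight;
  \item $\delta_2$ is the probability that a horizontal arrow entering alone continues straight;
  \item the configurations with $i=j$ are forced.
\end{itemize}
From \eqref{eq:uncolored6Vweights}, a lone vertical arrow passes straight with probability $qp$ and turns with probability $1-qp$. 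A lone horizontal arrow passes straight with probability $p$ and turns with probability $1-p$. So I will identify $\delta_1=qp$ and $\delta_2=p$. The inequality $\delta_1<\delta_2$ holds precisely because $q\in(0,1)$ and $p\in(0,1)$; this is the non-degeneracy condition needed in \cite{aggarwal2020limit}. If Aggarwal's orientation has horizontal arrows moving in the opposite direction, I will first apply the reflection $u\mapsto -u$ of $\mathfrak{T}_M$. This changes only the sign of the flux term, and I will undo it at the end.

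Next I will compute the flux. In \cite{aggarwal2020limit} the particle current of the translation-invariant (product Bernoulli) stationary measure of density $z$ is $\varphi(z)=\kappa z/((\kappa-1)z+1)$, with $\kappa=(1-\delta_1)/(1-\delta_2)$. Substituting $\delta_1=qp$ and $\delta_2=p$ gives exactly $\kappa=(1-qp)/(1-p)$, which matches \eqref{eq:flux_def}. As a consistency check, I will verify directly that product Bernoulli($z$) measures on $\mathfrak{T}_M$ are stationary for the row-to-row transfer operator. I will also verify that the mean horizontal flux per row under these measures equals $\varphi(z)$; this is a short geometric-series computation along a row with the weights above.

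I will then check the remaining hypotheses. The initial data $\psi^{(M)}$ converge to $\Psi$ in the uniform integrated sense assumed above, which is exactly the hypothesis on initial data in \cite{aggarwal2020limit}. The time horizon $L=\lfloor\lambda M\rfloor$ is a fixed multiple of $M$. The free boundary at the top of $\mathfrak{C}_{M,L}$ is harmless, because the sampling is Markovian from row to row: the law of the first $L$ rows does not depend on what happens above them. The conclusion is a statement about empirical averages over $M^2$-scale space-time boxes, with $G$ the entropy solution of \eqref{eq:conservation_pde}, and this is exactly the form in which Aggarwal states his theorem.

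The main obstacle is bookkeeping conventions: coordinate orientation, whether particles are vertical or horizontal arrows, and whether the ``straight'' weights are labelled $b_1,b_2$ or $\delta_1,\delta_2$. Any one of these can swap $\kappa$ with $\kappa^{-1}$ or flip the sign of the flux. I will resolve this by recomputing the stationary current directly from \eqref{eq:uncolored6Vweights}, rather than trusting a dictionary. The check that $\varphi$ is concave on $[0,1]$ (since $\kappa>1$), with $\varphi(0)=0$ and $\varphi(1)=1$, confirms that the orientation is the correct one for the rarefaction-plus-shock picture used in \cref{sec:hydro}.
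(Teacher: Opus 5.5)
Your proposal is correct and takes essentially the paper's route: the paper does not reprove this result either, it simply quotes \cite{aggarwal2020limit}*{Theorem~1.1}. Your dictionary $\delta_1=qp$, $\delta_2=p$ (so that $0<\delta_1<\delta_2<1$ and $\kappa=(1-qp)/(1-p)>1$), together with the Burke-type Bernoulli check that recovers $\varphi(z)=\kappa z/((\kappa-1)z+1)$ with particles moving to the right in $U=Y$, is exactly the identification the paper leaves implicit when it writes down \eqref{eq:flux_def}.
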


\subsection{Explicit solution}
For the double-step initial data, the entropy solution can be constructed explicitly.
Define the \emph{reflection point}
\begin{equation}\label{eq:v_rp}
  v_{RP} \;=\; \frac{1-2\lambda}{\kappa - 1},
\end{equation}
the time at which the leading edge of the rarefaction fan ($u = v\kappa$)
first reaches the boundary line $u = v + 1 - 2\lambda$.

Let $\mathcal{V}$ be the piecewise curve in $\mathbb{T} \times [0,\lambda]$ defined by:
\begin{equation}
  \label{eq:def_curve}
  \mathcal{V}(v) \;=\;
  \begin{cases}
    v + 1 - 2\lambda, & 0 \le v \le v_{RP}, \\[6pt]
    \displaystyle\left( \sqrt{\frac{v}{\kappa}}
      + \sqrt{\frac{(1-2\lambda)(\kappa-1)}{\kappa}} \right)^{\!2},
    & v_{RP} < v \le \lambda.
  \end{cases}
\end{equation}
For $v \le v_{RP}$ the curve follows the boundary;
for $v > v_{RP}$ it detaches into the interior.
When $v_{RP} > \lambda$ (i.e.\ $x < 1/\kappa$),
Phase~2 does not occur and $\mathcal{V}(v) = v + 1 - 2\lambda$ throughout.
The two pieces join continuously at $v = v_{RP}$, where both give
$u = \kappa(1-2\lambda)/(\kappa-1)$.

Define the density function $G_{BCG}(u,v)$ by:

\begin{equation}
\label{eq:gbcg_def}
  G_{BCG}(u,v) :=
  \begin{cases} 1, \qquad \frac{v}{u} > \kappa, \\[6pt]
  \frac{1}{\kappa -1} \frac{(\sqrt{v \kappa} - \sqrt{u})}{\sqrt{u}}, \qquad \kappa^{-1} \leq \frac{v}{u} \leq \kappa, \\[6pt]
  0, \qquad \frac{v}{u} < \kappa^{-1}.
  \end{cases}
\end{equation}

The corresponding height function $h_{BCG}(u,v) = \int_u^{v\kappa} G_{BCG}(u',v)\,du'$ counts arrows to the \emph{right} of position~$u$, so $G_{BCG} = -\partial_u h_{BCG} \ge 0$.
From \eqref{eq:gbcg_def}, one computes
\begin{equation}\label{eq:hbcg_explicit}
  h_{BCG}(u,v) \;=\;
  \begin{cases}
    v - u, & u \le v/\kappa, \\[4pt]
    \displaystyle\frac{(\sqrt{v\kappa} - \sqrt{u})^2}{\kappa - 1},
      & v/\kappa < u \le v\kappa, \\[6pt]
    0, & u > v\kappa.
  \end{cases}
\end{equation}
In particular, $h_{BCG}(0, v) = v$.

Finally define $G_{shock}(u,v): \mathbb{T} \times [0, \lambda] \to [0,1]$:
\begin{equation}
  G_{shock} (u,v) := \begin{cases}
    G_{BCG} (u,v), & 0 \leq u \leq \mathcal{V}(v), \\
    1, & \mathcal{V}(v) < u < 1.
  \end{cases}
\end{equation}

\begin{theorem}\label{thm:entropy_solution}
  $G_{shock} (u,v)$ is the entropy solution to the conservation law \cref{eq:conservation_pde} and the curve $\mathcal{V}$ from \cref{eq:def_curve} is the shock propagation trajectory.
\end{theorem}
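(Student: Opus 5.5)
The plan is to verify directly that $G_{shock}$ is a weak solution of \cref{eq:conservation_pde} that satisfies the Lax (equivalently Oleinik) entropy condition across each of its discontinuities. Kruzhkov's uniqueness theorem for scalar conservation laws on the torus $\mathbb{T}$ then identifies it with \emph{the} entropy solution. Throughout I use that $\varphi$ is strictly concave on $[0,1]$ for $\kappa>1$, with $\varphi'(z)=\kappa/((\kappa-1)z+1)^2$ decreasing from $\varphi'(0)=\kappa$ to $\varphi'(1)=1/\kappa$. Note $\varphi(0)=0$, $\varphi(1)=1$, and
\[
1-\varphi(g)=\frac{1-g}{(\kappa-1)g+1}.
\]

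\textbf{Step 1 (smooth regions).} Off the curve $\mathcal V$, $G_{shock}$ is either constant (so trivially a solution) or equal to the BCG fan. On the fan I check the self-similar relation $\varphi'(G_{BCG}(u,v))=u/v$. Solving $\kappa/((\kappa-1)G+1)^2=u/v$ gives $G=(\sqrt{\kappa v/u}-1)/(\kappa-1)$, which is exactly \eqref{eq:gbcg_def}. Any function of $u/v$ satisfying this relation solves the PDE classically. The fan is continuous with the constants $1$ and $0$ at its edges $u=v/\kappa$ and $u=\kappa v$. It emanates from the initial jump at $u=0\equiv 1$, where the state goes from $1$ (on the left) to $0$ (on the right). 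Since $\varphi'(1)<\varphi'(0)$, this jump must open into a rarefaction rather than a shock, so the fan is the correct entropy choice there.

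\textbf{Step 2 (the shock, Phase 1).} For $v\le v_{RP}$ the curve $\mathcal V$ separates the left state $0$ (the region $u>\kappa v$) from the right state $1$. The Rankine--Hugoniot speed is $(\varphi(1)-\varphi(0))/(1-0)=1$, which matches $\mathcal V'(v)=1$. The Lax condition $\varphi'(0)=\kappa>1>1/\kappa=\varphi'(1)$ holds. I also check that $v_{RP}$ is precisely where $\kappa v=v+1-2\lambda$, i.e.\ where the fan's leading edge reaches the shock.

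\textbf{Step 3 (the shock, Phase 2).} For $v>v_{RP}$ the left state is $g=G_{BCG}(\mathcal V(v),v)$ and the right state is $1$. Rankine--Hugoniot gives
\[
\mathcal V'=\frac{1-\varphi(g)}{1-g}=\frac{1}{(\kappa-1)g+1}=\sqrt{\frac{\mathcal V}{\kappa v}}.
\]
This is a separable ODE, $\frac{d}{dv}\sqrt{\mathcal V}=\frac{1}{2\sqrt{\kappa v}}$, with solution $\sqrt{\mathcal V}=\sqrt{v/\kappa}+C$. The initial condition $\mathcal V(v_{RP})=\kappa v_{RP}$ forces
\[
C=\sqrt{v_{RP}}\,\frac{\kappa-1}{\sqrt\kappa}=\sqrt{\frac{(1-2\lambda)(\kappa-1)}{\kappa}},
\]
recovering \eqref{eq:def_curve}. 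Lax's condition $\varphi'(g)>\mathcal V'>\varphi'(1)$ follows from strict monotonicity of $\varphi'$ together with $g\in(0,1)$. For the latter, note that $\mathcal V(v)$ lies strictly inside the fan window $(v/\kappa,\kappa v)$: it equals the right endpoint at $v_{RP}$, and afterwards $\mathcal V'<\kappa$ while $\sqrt{\mathcal V}>\sqrt{v/\kappa}$.

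\textbf{Main obstacle.} The step needing the most care is the global geometry on the torus. One must confirm that for $v\in[0,\lambda]$ the picture is consistent: $v/\kappa<\mathcal V(v)<1$, so the shock never wraps around to hit the left edge of the fan through $u=1\equiv0$, and the state across the seam $u=1\equiv 0$ is $1$ on both sides, so no hidden discontinuity appears. Since $\mathcal V$ is increasing, I would reduce this to the inequality $\mathcal V(\lambda)<1$, using the explicit formula and $\lambda\le\tfrac12$ (with equality allowed only in degenerate cases). In Phase 1 this is immediate: $\mathcal V(\lambda)=1-\lambda<1$. Once these inequalities hold, the weak formulation follows by integrating by parts piecewise, the only interface terms being the Rankine--Hugoniot ones already checked. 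Kruzhkov uniqueness then completes the identification of $G_{shock}$ as the entropy solution and $\mathcal V$ as its shock trajectory.
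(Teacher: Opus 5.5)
Your proposal is correct and follows the same route as the paper: a rarefaction fan at $u=0$, a Rankine--Hugoniot shock from $u=1-2\lambda$ with the two phases and the separable ODE $\mathcal{V}'=\sqrt{\mathcal{V}/(\kappa v)}$, the Lax condition, and Kru{\v{z}}kov uniqueness. You also verify details the paper leaves implicit: the integration constant, that the left state lies in $(0,1)$, and that the shock stays below $u=1$ on the torus. The last point follows most quickly from $\mathcal{V}'\le 1$, which gives $\mathcal{V}(v)\le v+1-2\lambda\le 1-\lambda$.
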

\begin{proof}
The flux $\varphi$ is smooth on $[0,1]$, so Kru{\v{z}}kov's theorem \cite{kruzkov1970first} ensures existence and uniqueness of the entropy solution.

Both $G_1 (u,v) = 1$ and $G_{BCG}(u,v)$ satisfy the differential equation.
There are two points of discontinuity in the initial data, at $u=0$ and $u=1-2\lambda$.
Since the flux is increasing and strictly concave, discontinuities propagate to the right; for $u<0$ we have $G_1(u,v) = 1$.

At $u = 0$, the initial data jumps downward from $1$ to $0$.
For strictly concave flux, the entropy solution is a rarefaction fan, realized by $G_{BCG}$ (see \cite{BCG6V}).

At $u = 1-2\lambda$, the initial data jumps upward from $0$ to $1$.
For strictly concave flux, such jumps produce entropy shocks.
The shock trajectory $\mathcal{V}(v)$ has two phases:

\emph{Phase~1} ($0 \le v \le v_{RP}$).
While the rarefaction fan has not yet reached the shock,
the density to the left of the discontinuity remains~$0$.
The Rankine--Hugoniot condition gives
$\frac{d\mathcal{V}}{dv} = \frac{\varphi(1) - \varphi(0)}{1 - 0} = 1,$
so the shock moves at unit speed: $\mathcal{V}(v) = v + 1 - 2\lambda$.

\emph{Phase~2} ($v_{RP} < v \le \lambda$).
Once the rarefaction reaches the shock at $v = v_{RP}$,
the density on the left becomes $G_{BCG}(\mathcal{V}(v), v) > 0$.
The Rankine--Hugoniot condition becomes
\[
\frac{d\mathcal{V}}{dv} = \frac{1}{(\kappa-1)G_{BCG}(\mathcal{V}(v), v) + 1}
\]
with initial condition $\mathcal{V}(v_{RP}) = \kappa(1-2\lambda)/(\kappa-1)$.
In the rarefaction region, $(\kappa-1)G_{BCG}(u,v) + 1 = \sqrt{v\kappa/u}$,
so the ODE simplifies to $d\mathcal{V}/dv = \sqrt{\mathcal{V}/(v\kappa)}$,
which is separable.
Solving yields the quadratic piece in \eqref{eq:def_curve}.
The Lax entropy condition is satisfied: the shock separates $G_{BCG} < 1$ (left) from $1$ (right), and since $\varphi$ is concave, $\varphi'(G_{BCG}) > \dot{\mathcal{V}} > \varphi'(1)$.

Since both rarefaction and shock portions are entropy-admissible and the construction matches the initial data, $G_{\mathrm{shock}}$ is the desired entropy solution.
\end{proof}

\begin{corollary}\label{cor:arrow_conservation}
The shock curve $\mathcal{V}(v)$
from \eqref{eq:def_curve} satisfies
\begin{equation}\label{eq:arrow_conservation}
  h_{BCG}\!\bigl(\mathcal{V}(v),\, v\bigr)
  \;=\; v + 1 - 2\lambda - \mathcal{V}(v).
\end{equation}
\end{corollary}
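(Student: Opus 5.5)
The identity \eqref{eq:arrow_conservation} can be checked directly from the explicit formulas \eqref{eq:hbcg_explicit} and \eqref{eq:def_curve}, treating the two phases of $\mathcal{V}$ separately. In each phase the first task is to decide which branch of $h_{BCG}$ applies at the point $u=\mathcal{V}(v)$. After that the claim reduces to an algebraic identity. Conceptually, it says that the arrows lost from the rarefaction to the right of the shock are exactly compensated by the unit-density block that the shock carries. One could also derive it by integrating the Rankine--Hugoniot ODE from the proof of \cref{thm:entropy_solution}, but the direct verification is shorter.

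\emph{Phase~1} ($0\le v\le v_{RP}$). Here $\mathcal{V}(v)=v+1-2\lambda$, so the right-hand side of \eqref{eq:arrow_conservation} vanishes. It therefore suffices to show $\mathcal{V}(v)\ge v\kappa$, because then the third branch of \eqref{eq:hbcg_explicit} gives $h_{BCG}=0$. The inequality $v+1-2\lambda\ge v\kappa$ is equivalent to $v\le (1-2\lambda)/(\kappa-1)=v_{RP}$, by the definition \eqref{eq:v_rp}.

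\emph{Phase~2} ($v_{RP}<v\le\lambda$). Introduce the abbreviations
\[
a=\sqrt{v/\kappa},\qquad b=\sqrt{(1-2\lambda)(\kappa-1)/\kappa},
\]
so that $\mathcal{V}(v)=(a+b)^2$, $\sqrt{v\kappa}=\kappa a$, and $1-2\lambda=\kappa b^2/(\kappa-1)$.

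First we confirm that $\mathcal{V}(v)$ lies in the middle branch, i.e.\ $v/\kappa<\mathcal{V}(v)\le v\kappa$. The lower bound $a^2<(a+b)^2$ holds since $b>0$; in the degenerate case $\lambda=1/2$ we have $b=0$, and the identity can be checked at the boundary point directly. The upper bound $a+b\le\kappa a$ is equivalent to $b\le(\kappa-1)a$. Squaring, this reads $(1-2\lambda)(\kappa-1)/\kappa\le(\kappa-1)^2v/\kappa$, which is exactly $v\ge v_{RP}$.

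With the branch identified, the left-hand side of \eqref{eq:arrow_conservation} is
\[
\frac{(\kappa a-a-b)^2}{\kappa-1}=(\kappa-1)a^2-2ab+\frac{b^2}{\kappa-1}.
\]
For the right-hand side, substitute $v=\kappa a^2$ and $1-2\lambda=\kappa b^2/(\kappa-1)$ into $v+1-2\lambda-(a+b)^2$. This gives
\[
\kappa a^2+\frac{\kappa b^2}{\kappa-1}-a^2-2ab-b^2=(\kappa-1)a^2-2ab+\frac{b^2}{\kappa-1},
\]
so the two sides agree.

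The only step needing care is the branch check in Phase~2, namely that the shock stays inside the fan, $\mathcal{V}(v)\le v\kappa$, precisely for $v\ge v_{RP}$. Once the abbreviations $a$ and $b$ are in place, this is the one-line equivalence shown above, and the rest of the argument is routine algebra.
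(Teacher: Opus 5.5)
Your argument is correct. The branch check $a^2<(a+b)^2\le\kappa^2a^2 \iff v\ge v_{RP}$ holds, both sides do reduce to $(\kappa-1)a^2-2ab+b^2/(\kappa-1)$, and Phase~1 is fine. (At $v=v_{RP}$ the point $\mathcal{V}(v)=v\kappa$ formally lies in the middle branch of \eqref{eq:hbcg_explicit}, but that branch also gives $0$ there.)

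The paper gives no written proof; it presents the identity as a consequence of \cref{thm:entropy_solution}. The implied argument is mass conservation on $\mathbb{T}$: $\int_0^1 G_{shock}(u,v)\,du=\int_0^1\Psi=2\lambda$ for every $v$. Splitting this integral at $\mathcal{V}(v)$ and using $h_{BCG}(0,v)=v$ gives $v-h_{BCG}(\mathcal{V}(v),v)+1-\mathcal{V}(v)=2\lambda$, which is \eqref{eq:arrow_conservation}. Equivalently, the Rankine--Hugoniot condition makes $h_{BCG}(\mathcal{V}(v),v)+\mathcal{V}(v)-v$ constant in $v$, which is the route you mention.

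The two approaches trade off as follows. The conservation derivation explains the name ``arrow conservation'' and the mass-balance reading used in \cref{sec:permuton}, and it does not depend on closed forms. However, it leans on \cref{thm:entropy_solution}, in particular on the shock curve being computed correctly and staying inside the fan. Your direct check is independent of the PDE theory. It also makes explicit that $v/\kappa<\mathcal{V}(v)\le v\kappa$ for $v_{RP}<v\le\lambda$, a fact the proof of \cref{thm:entropy_solution} uses tacitly when it substitutes $(\kappa-1)G_{BCG}+1=\sqrt{v\kappa/u}$ along the shock. Its only cost is that it verifies the identity rather than deriving it.
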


\section{Couplings and limit shapes}
\label{sec:main}

We embed the color-forgotten model from \cref{sec:model}
into the cylinder and carry out the sandwich argument.

\subsection{Trapezoid coordinates and cylinder embedding}
\label{sec:torus}

Let $x \in (0,1)$ be such that $X = \lfloor xn \rfloor$.
After color-forgetting with threshold $X$,
the colored S6V on the triangle $\delta$ becomes an uncolored S6V with
weights $w_{p,q}$ from \eqref{eq:uncolored6Vweights}.

We focus on the \emph{trapezoid}: the subregion of $\delta$ where $S$
ranges from $n - X + 1$ to~$n$.
Introduce \emph{trapezoid coordinates}
\begin{equation}\label{eq:trap_coords}
  U = Y, \qquad V = S - (n - X),
\end{equation}
At level~$V$, row $S = V + n - X$ carries exactly $V$ arrows
in the active region (one new color-${\le}\,X$ arrow enters from the left at each row).
The \emph{reflecting boundary} lies along the diagonal $S = Y$
of~$\delta$:
\begin{equation}\label{eq:reflecting_bdy}
  U - V = n - X.
\end{equation}

\begin{figure}[t]
\centering
\begin{tikzpicture}[>=stealth, scale=0.55]
  \def\n{10}
  \def\X{4}
  \pgfmathsetmacro{\nX}{\n-\X}          

  \fill[gray!8] (0,0) -- (0,\n) -- (\n,\n) -- cycle;
  \fill[red!12] (0,\nX) -- (0,\n) -- (\n,\n) -- (\nX,\nX) -- cycle;

  \foreach \i in {1,...,9} {
    \draw[gray!25, thin] (0,\i) -- (\i,\i);
    \draw[gray!25, thin] (\i,\i) -- (\i,\n);
  }

  \draw[very thick] (0,0) -- (0,\n) -- (\n,\n) -- cycle;
  \draw[thick, dashed, red!50!black] (0,\nX) -- (\nX,\nX);

  \foreach \i in {1,...,\X} {
    \pgfmathsetmacro{\row}{\nX+\i-0.5}
    \draw[->, red!70!black, thick] (-0.55, \row) -- (0.45, \row);
    \node[left, font=\scriptsize, red!70!black] at (-0.6, \row) {\pgfmathprintnumber[int detect]{\i}};
  }

  \node[font=\large] at (1.5, 2.5) {$\delta$};
  \node[font=\small, red!50!black] at (2.2, 8.5) {trapezoid};

  \node[rotate=45, font=\small, fill=white, inner sep=1pt]
    at ({0.55*\n},{0.55*\n}) {diagonal $S\!=\!Y$};

  \draw[->] (-1.8, 0) -- (-1.8, \n+0.8);
  \node[above] at (-1.8, \n+0.8) {$S$};
  \draw[->] (0, -0.7) -- (\n+0.8, -0.7);
  \node[right] at (\n+0.8, -0.7) {$Y$};

  \draw (-2.0,0) -- (-1.6,0);
  \node[left, font=\small] at (-2.1, 0) {$1$};
  \draw (-2.0,\nX) -- (-1.6,\nX);
  \node[left, font=\small] at (-2.1, \nX) {$n\!-\!X$};
  \draw (-2.0,\n) -- (-1.6,\n);
  \node[left, font=\small] at (-2.1, \n) {$n$};

  \draw (0,-0.9) -- (0,-0.5);
  \node[below, font=\small] at (0, -0.9) {$1$};
  \draw (\n,-0.9) -- (\n,-0.5);
  \node[below, font=\small] at (\n, -0.9) {$n$};

  \draw[->, blue!60!black, thick] (\n+1.3, \nX) -- (\n+1.3, \n+0.8);
  \node[above, blue!60!black] at (\n+1.3, \n+0.8) {$V$};

  \draw[blue!60!black] (\n+1.1,\nX) -- (\n+1.5,\nX);
  \node[right, font=\small, blue!60!black] at (\n+1.6, \nX) {$0$};
  \draw[blue!60!black] (\n+1.1,\n) -- (\n+1.5,\n);
  \node[right, font=\small, blue!60!black] at (\n+1.6, \n) {$X$};

  \node[font=\small, blue!60!black, anchor=north west] at (\n+1.6, \nX-0.5)
    {$U = Y$};
  \node[font=\small, blue!60!black, anchor=north west] at (\n+1.6, \nX-1.4)
    {$V = S-(n\!-\!X)$};
\end{tikzpicture}
\caption{The triangular domain $\delta = \{(S,Y) : 1 \le Y < S \le n\}$
  and the trapezoid subregion (shaded, $S \ge n-X+1$)
  with the coordinate change \eqref{eq:trap_coords}.
  The diagonal $S = Y$ is the reflecting boundary
  of the color-forgotten model.}
\label{fig:triangle}
\end{figure}
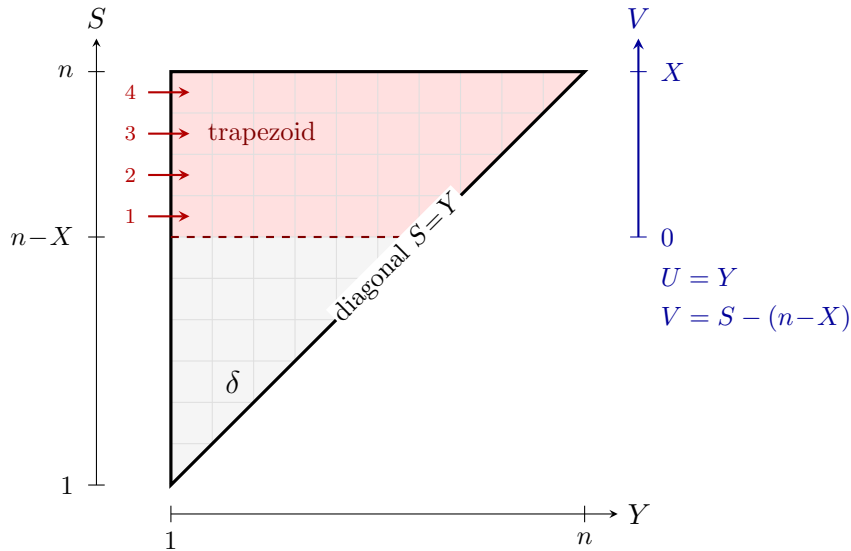

To embed the trapezoid into a cylinder, set $M = n + X$
and identify $\mathfrak{T}_M = \mathbb{Z}/M\mathbb{Z}$.
At each level~$V$, the active region (the trapezoid)
occupies positions $U \in \{1, \dots, V + n - X\}$ on the circle,
To complete the embedding, we fill the complementary arc
with a \emph{frozen block} of $2X - V$ sites, all occupied by arrows.
The reflecting boundary vertex (\cref{sec:model}) acts deterministically, so no arrows cross between the active region and the complement, and this addition does not affect the dynamics inside the trapezoid.
The total number of arrows on each level is therefore $2X$:
$V$ stochastic arrows in the active region and $2X - V$ frozen arrows
in the complement.

We work in the rescaled coordinates $(u, v) = (U/M,\, V/M)$ on the cylinder.
Set
\begin{equation}\label{eq:lambda_def}
  \lambda \;:=\; \frac{x}{1+x},
\end{equation}
so that, since $X = \lfloor xn \rfloor$ and $M = n+X$,
\[
  \frac{X}{M} \;\xrightarrow{\;n\to\infty\;}\; \lambda.
\]
In particular $0 < \lambda \le \frac12$ for $x \in (0,1)$.
The reflecting boundary becomes asymptotically
\begin{equation}\label{eq:reflecting_rescaled}
  u - v \;=\; \frac{n - X}{M} \;\xrightarrow{\;n\to\infty\;}\; 1 - 2\lambda.
\end{equation}
The active region at time $v$ is $\{u \in [0,\, v + 1 - 2\lambda)\}$
and the frozen block is $\{u \in [v + 1 - 2\lambda,\, 1)\}$.

\subsection{Monotone coupling}
\label{sec:coupling}

We compare the reflective system with two S6V systems.

For the lower bound we use the \emph{double-step system} on $\mathfrak{C}_{M,L}$ with double-step initial data
$\Psi(u) = 1 - \mathbf{1}_{0 \le u \le 1-2\lambda}$
from \eqref{eq:initial_conditions_double_step}.
At $V = 0$, this system has the same $2X$ arrows in $[1 - 2\lambda, 1)$
as the reflective system.
However, the diagonal boundary imposes no barrier at the reflecting
boundary position: arrows may freely cross in either direction.

For the \emph{upper bound}, we compare the active particles of the reflective
system directly with the \emph{step-initial S6V on the quadrant}: $V$ particles
at level~$V$, evolving with the same weights $w_{p,q}$ but with no reflecting
boundary constraint.
By \cite{BCG6V}, this system converges to the rarefaction profile $G_{BCG}$
from \eqref{eq:gbcg_def}.

\begin{figure}[t]
\centering
\makebox[\textwidth][c]{%
  \includegraphics[height=4.5cm, keepaspectratio]{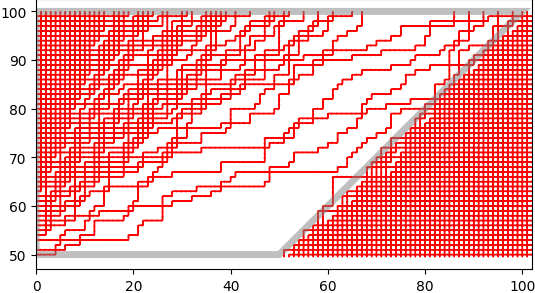}%
  \hspace{0.4cm}%
  \includegraphics[height=4.5cm, keepaspectratio]{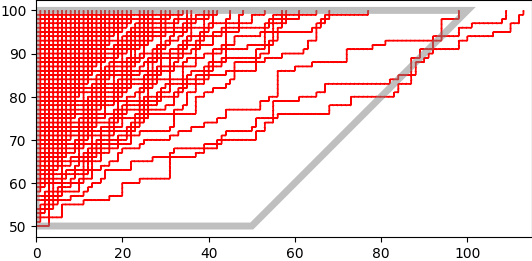}%
}

\vspace{0.15cm}

\includegraphics[width=0.7\textwidth]{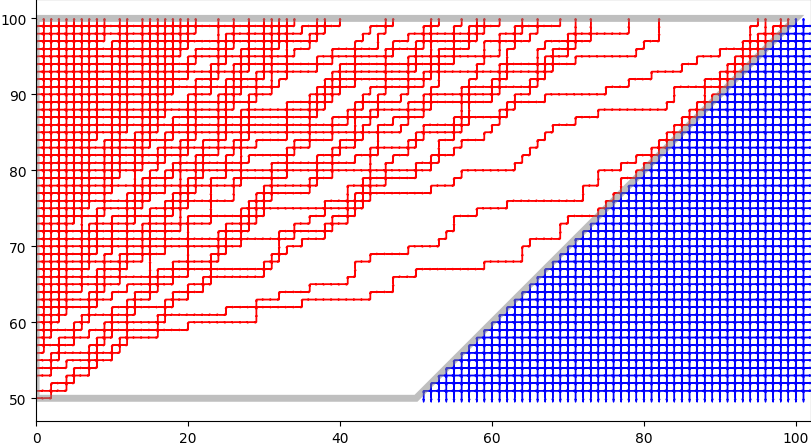}
\caption{The monotone coupling of \cref{lem:monotone_coupling}
  ($n = 100$, $p = 0.7$, $q = 0.5$, $X = 50$).
  Red paths are particle trajectories; axes are spatial coordinate~$U$ (horizontal) and time~$V$ (vertical).
  \emph{Top left:} the double-step system on the cylinder (lower bound on total height);
  frozen-block particles leak freely into the active region.
  \emph{Top right:} the step-initial system on the quadrant (upper bound on active height);
  particles spread past the reflecting boundary (gray line).
  \emph{Bottom:} the reflective system --- the model of interest.
  Active particles (red) are confined to the left of the frozen block (blue).
  The active height function is sandwiched between the two reference systems.}
\label{fig:coupling}
\end{figure}

\medskip

To state the coupling, we cut
$\mathfrak{T}_M$ at position $U = 0$ and work on the interval
$\{0, 1, \dots, M-1\}$.
The reflective height function decomposes as
\begin{equation}\label{eq:height_decomp}
  H^{refl}(V, U) \;=\; H^{act}(V, U) + F_V(U),
\end{equation}
where $H^{act}$ counts active (stochastic) particles and
$F_V$ counts frozen particles at positions $\ge U$.
Since the frozen block is fully packed and no arrows cross the reflecting
boundary, $F_V$ is deterministic.
Away from the reflecting boundary vertex, both the reflective
and double-step systems evolve with
the ordinary S6V weights $w_{p,q}$. At the reflecting boundary vertex, the reflective
system is deterministic: an incoming horizontal arrow from the active region is
forced to turn upward, while in the absence of such an arrow the incoming
vertical arrow from the frozen block is forced to turn right. In particular, no
arrow in the reflective system can cross from the active region into the frozen
block.

The coupled dynamics are arranged so that away from the reflecting boundary,
the reference systems use the same local S6V randomness as the reflective
system. Thus the only place where the reflective system differs from the
double-step and quadrant systems is the reflecting boundary vertex.
Accordingly, the ordering follows from bulk monotonicity
and a direct check at the reflecting boundary vertex.

For any finite particle configuration $\eta$, let
\[
  \mathbf{p}(\eta) = \bigl(p_1(\eta) < \cdots < p_N(\eta)\bigr)
\]
denote the ordered positions of its vertical arrows. If $\eta$ and $\xi$ have
the same number of particles, write
\[
  \mathbf{p}(\eta) \succeq \mathbf{p}(\xi)
\]
if $p_i(\eta) \ge p_i(\xi)$ for every~$i$. Equivalently, this means that the
corresponding suffix-count height functions are ordered pointwise.

\begin{lemma}[{\cite[Proposition~2.6]{aggarwal2020limit}}]
\label{lem:aggarwal_monotone}
Let $\eta_0$ and $\xi_0$ be two finite stochastic six-vertex particle
configurations on $\mathbb{Z}$ with the same number of particles, and assume
\[
  \mathbf{p}(\eta_0) \succeq \mathbf{p}(\xi_0).
\]
Then one can couple one time step of the ordinary stochastic six-vertex
dynamics with weights $w_{p,q}$ so that
\[
  \mathbf{p}(\eta_1) \succeq \mathbf{p}(\xi_1)
\]
almost surely.
\end{lemma}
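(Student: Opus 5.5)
The plan is to build the coupling row by row, in the order the configuration is sampled: left to right within one time step. The key device is the \emph{basic coupling}, in which one uniform random variable per vertex drives both systems. Fix a time step. Each vertex at position $U$ has inputs: the vertical occupation $\eta_0(U)$ from below, and the horizontal arrow $j$ carried from the vertex at $U-1$. Its outputs are the vertical occupation $\eta_1(U)$ and the horizontal arrow passed to $U+1$. Only two inputs are random: $(i,j)=(1,0)$ goes straight with probability $qp$ and turns with probability $1-qp$, while $(0,1)$ goes straight with probability $p$ and turns with probability $1-p$. The inputs $(0,0)$ and $(1,1)$ are deterministic. At each site $U$ draw a single uniform $\omega_U\in[0,1]$ and use it in both the $\eta$ and $\xi$ systems. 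A vertex with input $(1,0)$ turns iff $\omega_U\le 1-qp$, and one with input $(0,1)$ turns iff $\omega_U\le 1-p$. Since $\kappa>1$ gives $1-p<1-qp$, these events are nested. Because the configurations are finite, the step runs from the far left, where no horizontal arrow is present, and the outputs are well defined.

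It is convenient to restate the order $\succeq$ through suffix counts. Let $h_\eta(U)=\#\{\text{particles of }\eta\text{ at positions}\ge U\}$. Then $\mathbf p(\eta)\succeq\mathbf p(\xi)$ with equal particle numbers is equivalent to $h_\eta(U)\ge h_\xi(U)$ for all $U$. Arrow conservation across the vertices at positions $\ge U$ gives the local update
\[
h_{\eta_1}(U)=h_{\eta_0}(U)+j^\eta_U,
\]
where $j^\eta_U\in\{0,1\}$ is the horizontal arrow entering position $U$ from the left. In the other direction, the horizontal arrow leaving site $U$ equals $j^\eta_U+\eta_0(U)-\eta_1(U)$.

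The claim is then proved by induction on $U$ from left to right, propagating the invariant $h_{\eta_1}(U)\ge h_{\xi_1}(U)$. Write $D_0(U)=h_{\eta_0}(U)-h_{\xi_0}(U)\ge0$ and $D_1(U)=D_0(U)+j^\eta_U-j^\xi_U$. The only dangerous case is $D_0(U)=0$, $j^\eta_U=0$, $j^\xi_U=1$, i.e.\ $D_1(U)=-1$, and the task is to show it never arises. Suppose the invariant holds at $U$, so $D_1(U)\ge0$; it remains to show $D_1(U+1)\ge0$. Since $D_1(U+1)$ is determined by $D_0(U+1)=D_0(U)-\eta_0(U)+\xi_0(U)$ and the outgoing arrows, a short case check on $(\eta_0(U),\xi_0(U),j^\eta_U,j^\xi_U)$ suffices. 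Whenever $D_1(U)\ge1$ the bound is automatic, because one step changes $D$ by at most $1$. So only configurations with $D_1(U)=0$ need checking. These are finitely many. In them, the two systems' vertices either see identical inputs, and hence produce identical outputs under the shared $\omega_U$, or they see the pair $(1,0)$ versus $(0,1)$. For that pair, the nesting $1-p<1-qp$ guarantees that $\xi$ cannot emit a horizontal arrow while $\eta$ does not, in exactly the situation where this would push $D$ negative. I expect this enumeration, and choosing the coupling direction so that the nesting goes the right way, to be the only real obstacle. If the case $(\eta_0(U),\xi_0(U))=(1,0)$ with $j^\eta=0$, $j^\xi=1$ and $D_0(U)=0$ breaks it, I would swap which threshold is compared, or couple through the complementary event, and recheck.

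The induction starts at the far left, where both horizontal inputs vanish and $D_1=D_0\ge0$. It closes at the far right, where every particle has been placed, so particle numbers are conserved and $D_1=0$ there. Together with $D_1\ge0$ everywhere, this is exactly $\mathbf p(\eta_1)\succeq\mathbf p(\xi_1)$ almost surely.
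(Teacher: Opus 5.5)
Your coupling does not preserve the order, and it fails exactly in the case you flagged. With your thresholds,
$\{\text{a }(1,0)\text{ vertex keeps its vertical arrow}\}=\{\omega_U>1-qp\}\subset\{\omega_U>1-p\}=\{\text{a }(0,1)\text{ vertex passes its horizontal arrow}\}$.
So the nesting gives the opposite of what you claim: whenever $\eta$ emits nothing, $\xi$ emits.

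Concretely, take $\eta_0=\{1\}$ and $\xi_0=\{0\}$. At site $0$, $\xi$ sees $(1,0)$ and emits iff $\omega_0\le 1-qp$. At site $1$, $\eta$ sees $(1,0)$ and keeps its particle iff $\omega_1>1-qp$, while $\xi$ sees $(0,1)$ and passes iff $\omega_1>1-p$. On the event $\{\omega_0\le 1-qp<\omega_1\}$, which has probability $qp(1-qp)>0$, you get $\eta_1=\{1\}$ but $\xi_1\subset\{2,3,\dots\}$. In your notation this is $D_0(1)=1$, $j^\eta_1=0$, $j^\xi_1=1$, hence $D_1(1)=0$ and $D_1(2)=-1$. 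So the dangerous case has $D_0(U)=1$, not $D_0(U)=0$.

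Your fallback, letting a $(0,1)$ vertex pass iff $\omega_U\le p$, repairs this only partly. The only configuration in which $D_1$ can go negative is $\eta$ with input $(1,0)$, $\xi$ with input $(0,1)$, and $D_0(U)=1$. There the bad event $\{\omega_U>1-qp\}\cap\{\omega_U\le p\}$ is empty iff $p(1+q)\le 1$.

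For $p(1+q)>1$ the approach cannot work; an example is $p=0.7$, $q=0.5$, the values used in the paper's figure. The events ``$\eta$ keeps its particle'' and ``$\xi$'s arrow passes'' have probabilities summing to $qp+p>1$. Hence in the example above, \emph{every} coupling that updates site $U$ using only the current inputs there violates the order with probability at least $(1-qp)(qp+p-1)>0$.

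The missing idea is a non-local coupling that correlates randomness at different sites. In the example, you give both particles the same jump, i.e.\ you match $\eta$'s decision at site $1$ with $\xi$'s decision at site $0$. Building such a coupling for general configurations is the nontrivial part. The paper does not prove it; it imports the statement from Aggarwal's Proposition~2.6.
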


\begin{lemma}[Monotone coupling]
\label{lem:monotone_coupling}
There exists a coupling such that for all $U \in \{0, 1, \dots, M-1\}$
and $V \in \{0, 1, \dots, X\}$:
\begin{equation}\label{eq:height_ordering}
  H^{refl}(V, U)
  \;\ge\; H^{dstep}(V, U)
  \qquad\text{and}\qquad
  H^{act}(V, U)
  \;\le\; H^{quad}(V, U),
\end{equation}
where $H^{quad}$ is the height function of the step-initial S6V on the quadrant.
\end{lemma}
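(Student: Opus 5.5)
We will prove both inequalities by induction on the level $V$, building the coupling one row at a time. At each vertex that is not the reflecting boundary vertex, the three systems share the local randomness supplied by \cref{lem:aggarwal_monotone}. The ordering of suffix-count height functions is equivalent to the ordering $\mathbf{p}(\cdot)\succeq\mathbf{p}(\cdot)$ of particle positions when the particle numbers agree. So at each step we only need two things: that \cref{lem:aggarwal_monotone} applies in the bulk, and that a direct check at the reflecting boundary vertex preserves the order.

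\emph{Base case.} At $V=0$ the reflective and double-step systems carry the same $2X$ arrows in $[n-X,M)$, so $H^{refl}(0,\cdot)=H^{dstep}(0,\cdot)$. Likewise the active configuration is empty, exactly as in the step-initial quadrant system at time $0$, so $H^{act}(0,\cdot)=H^{quad}(0,\cdot)=0$.

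\emph{Lower bound.} Assume $H^{refl}(V,\cdot)\ge H^{dstep}(V,\cdot)$. Cut the cylinder at $U=0$ and view one row update as ordinary S6V dynamics on an interval, treating the horizontal arrow entering at the cut as an extra boundary input. Pointwise ordering of suffix counts says that the reflective particles sit weakly to the right of the double-step ones.

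First run the double-step update and an auxiliary unconstrained update of the reflective configuration, coupled by \cref{lem:aggarwal_monotone}. Then compare the true reflective update with this auxiliary one. They differ only at the reflecting vertex: the reflective rule forces an incoming horizontal arrow upward and keeps frozen arrows from moving left into the active region. Each such forced move either keeps a particle at its current position or pushes a horizontal arrow upward at the rightmost active site. Both moves can only increase suffix counts at every $U$, so the reflective height dominates the auxiliary one, which in turn dominates $H^{dstep}$.

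To make this rigorous, we plan to show that the reflecting move is the $q$-independent deterministic map sending a configuration to one that is $\succeq$ it. Such a map composed with a monotone update stays monotone. The case analysis is over the four possible input pairs at the boundary vertex. The one to watch is when an active horizontal arrow arrives and the frozen vertical arrow is present. Here the reflective system outputs up-and-right deterministically, whereas the S6V weight with probability $1-qp$ would send the active arrow rightward into the frozen block. That move lowers the count only to the left of the boundary relative to the reflected choice, so the reflected choice still dominates.

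\emph{Upper bound.} Restricted to the active region, the reflective active particles form a step-initial S6V except that no arrow may exit past $U=V+n-X$; a blocked arrow turns up at the boundary instead. The quadrant system lets arrows continue right. Blocking therefore keeps active particles weakly to the left, i.e.\ $\mathbf{p}(\text{act})\preceq\mathbf{p}(\text{quad})$. Since both have $V$ particles at level $V$, this is $H^{act}\le H^{quad}$, and we propagate it with \cref{lem:aggarwal_monotone} at the bulk vertices plus the same boundary check.

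\emph{Main obstacle.} The hard part is making ``bulk coupling plus boundary check'' literally compatible with \cref{lem:aggarwal_monotone}, which couples whole-row updates on $\mathbb{Z}$ rather than vertex by vertex. My plan is to realize the reflecting vertex as an S6V vertex with a modified weight (crossing probability $1$), and then to use Aggarwal's proof, which is a vertex-by-vertex coupling of left-to-right sweeps. That coupling tolerates inhomogeneous weights as long as they are stochastic and satisfy the same attractiveness inequality. I would verify this inequality for the degenerate weight directly.
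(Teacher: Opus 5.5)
Your architecture is the paper's: induction on $V$, Aggarwal's coupling at bulk vertices, and a local check at the reflecting vertex. For the upper bound you also use the paper's observation that forcing a blocked active arrow to turn up keeps the active particles weakly left of the quadrant ones, and that part is fine.

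The gap is the lower-bound check at the reflecting vertex, which is the only nontrivial step, and you carry it out on the wrong case. When an active horizontal arrow meets the frozen vertical arrow, the input is $(1,1)$ and $w_{p,q}(1,1;1,1)=1$. So the ordinary S6V vertex also outputs $(1,1)$. Height functions do not record which arrow goes where, so there is nothing to compare. The probability $1-qp$ you invoke belongs to a \emph{lone vertical} arrow turning right.

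The case that actually differs is a frozen vertical arrow with \emph{no} incoming horizontal arrow. There the double-step vertex lets the arrow continue up with probability $qp$, leaving a particle on the newly active site, while the reflective rule forces it to turn right. This is the check the paper makes: couple so that the reflective arrow turns right whenever the double-step one does. In the remaining event the double-step particle sits weakly to the left, so $\mathbf r_{V+1}\succeq\mathbf d_{V+1}$. Put differently, from the reflecting vertex rightward the reflective row carries a horizontal arrow on every edge, which is the maximal flux.

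Your account of the mechanism is also reversed. It is the S6V vertex that ``keeps a particle at its current position,'' and the reflective rule that moves it right. Moreover, ``pushing a horizontal arrow upward'' does not increase suffix counts: it keeps a particle to the left, exactly as you correctly use in the upper bound. So the justification ``both moves can only increase suffix counts'' is false for one move and misattributed for the other.

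The reflective row is also not the unconstrained row followed by a deterministic map. After a $qp$-event at the boundary, the next block vertices in the unconstrained run are again random, and the discrepancy propagates through the block and across the cut. The comparison therefore has to be made inside one coupled left-to-right sweep.

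Your ``main obstacle'' plan needs the same thing. Attractiveness of a single inhomogeneous model compares ordered initial data under the \emph{same} kernel. Here two different kernels meet at one vertex, and comparing them is exactly the boundary check above. Note also that the reflecting vertex is the degenerate weight in which a lone arrow always turns, i.e.\ straight-through probability $0$, not $1$.
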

\begin{proof}
We argue by induction on $V$.
Let $\mathbf r_V$ and $\mathbf d_V$ denote the full particle-position sequences
of the reflective and double-step systems (both have $2X$ particles),
and let $\mathbf r_V^{act}$ and $\mathbf q_V$ denote the active particles of
the reflective system and the quadrant system (both have $V$ particles).

\emph{Base case ($V = 0$).}
The reflective and double-step systems have the same initial configuration,
so $\mathbf r_0 = \mathbf d_0$.
Both active-particle systems are empty.

\emph{Inductive step.}
Assume $\mathbf r_V \succeq \mathbf d_V$ and
$\mathbf q_V \succeq \mathbf r_V^{act}$.
On every vertex strictly to the left of the reflecting boundary, all systems evolve with
the same S6V rule, so \cref{lem:aggarwal_monotone} preserves both orderings
in the bulk. It remains to check the reflecting boundary vertex.

\emph{Lower bound} ($H^{refl} \ge H^{dstep}$):
If a horizontal arrow arrives from the active region, both systems see
the deterministic $(1,1) \mapsto (1,1)$ configuration, so the ordering
is unchanged.
If no horizontal arrow arrives, the reflective system forces the incoming
vertical arrow from the frozen block to turn right (the frozen block stays
packed), while the double-step system applies the ordinary $(1,0)$ stochastic
vertex: the arrow continues upward into the active region with probability~$qp$ and turns right with probability~$1-qp$.
In the latter case one particle shifts weakly to the left,
so $\mathbf r_{V+1} \succeq \mathbf d_{V+1}$.

\emph{Upper bound} ($H^{act} \le H^{quad}$):
If no horizontal active arrow reaches the reflecting boundary, the bulk coupling gives
$\mathbf q_{V+1} \succeq \mathbf r_{V+1}^{act}$.
If a horizontal arrow does reach the reflecting boundary, the reflective system forces it
to turn upward, while the quadrant system applies the ordinary $(0,1)$ stochastic
vertex: the arrow continues to the right with probability~$p$ and turns upward with probability~$1-p$.
In the latter case one particle shifts weakly to the right,
so $\mathbf q_{V+1} \succeq \mathbf r_{V+1}^{act}$.
\end{proof}

\subsection{Limit shape of the reflective system}
\label{sec:permuton}

We identify the limit of $H^{act}$
using \cref{lem:monotone_coupling} and the
hydrodynamic limits from \cref{sec:hydro}.

In rescaled coordinates $(u,v) = (U/M, V/M)$, the reference systems are governed by known hydrodynamic profiles:
\begin{itemize}
  \item The \emph{quadrant system} converges to the rarefaction profile
    $G_{BCG}(u,v)$ from \eqref{eq:gbcg_def}, by \cite{BCG6V}.
  \item The \emph{double-step system} has hydrodynamic profile $G_{shock}(u,v)$ from
    \cref{sec:hydro}, in the sense of \cite[Theorem~1.1]{aggarwal2020limit}.
\end{itemize}
Write $h^{dstep}(u,v) = \int_u^1 G_{shock}(u',v)\,du'$ and $h^{act}(u,v)$ for the hydrodynamic limits of the corresponding discrete height functions, and let $h_{BCG}(u,v)$ be as in \eqref{eq:hbcg_explicit}.
The frozen block contributes a deterministic term $2\lambda - v$ for $u$ in the
active region ($u < v + 1 - 2\lambda$).
By \cref{lem:monotone_coupling}, the active height function is sandwiched:
\begin{equation}\label{eq:sandwich_rescaled}
  h^{dstep}(u,v) - (2\lambda - v)
  \;\le\; h^{act}(u,v)
  \;\le\; h_{BCG}(u,v).
\end{equation}

We now show that both bounds coincide for $u \le \mathcal{V}(v)$.
The upper bound $h^{act} \le h_{BCG}$ holds by construction.
For the lower bound, we split the integral defining $h^{dstep}$
at the shock curve:
\[
  h^{dstep}(u,v)
  = \int_u^{\mathcal{V}(v)} G_{BCG}(u',v)\,du'
    + \int_{\mathcal{V}(v)}^1 1\,du'
  = h_{BCG}(u,v) - h_{BCG}\!\bigl(\mathcal{V}(v),v\bigr)
    + \bigl(1 - \mathcal{V}(v)\bigr).
\]
By \cref{cor:arrow_conservation},
$h_{BCG}\!\bigl(\mathcal{V}(v),v\bigr) = v + 1 - 2\lambda - \mathcal{V}(v)$,
so the last two terms equal $2\lambda - v$, giving
$h^{dstep}(u,v) - (2\lambda - v) = h_{BCG}(u,v)$.
The sandwich \eqref{eq:sandwich_rescaled} collapses on $[0,\, \mathcal{V}(v)]$.

It remains to determine the density to the right of $\mathcal{V}(v)$
when $v > v_{RP}$.
By \cref{cor:arrow_conservation}, $h_{BCG}\!\bigl(\mathcal{V}(v), v\bigr) = v + 1 - 2\lambda - \mathcal{V}(v)$:
the BCG height function at $\mathcal{V}(v)$ accounts for exactly as many arrows
as there is space between the shock curve and the reflecting boundary.
Since $h^{act}(0,v) = v$ (no arrows cross the reflecting boundary) and the sandwich gives $h^{act}(\mathcal{V}(v), v) = h_{BCG}(\mathcal{V}(v), v)$, the remaining mass on
$[\mathcal{V}(v),\, v + 1 - 2\lambda)$ equals the length of the interval.
Since $-\partial_u h^{act} \in [0,1]$, the density must equal~$1$ a.e.\ on this interval.

Combining, the active height function converges to
\begin{equation}\label{eq:h_refl}
  h(u,v) \;=\;
  \begin{cases}
    h_{BCG}(u,v), & u \le \mathcal{V}(v), \\[4pt]
    h_{BCG}\!\bigl(\mathcal{V}(v),\, v\bigr) - \bigl(u - \mathcal{V}(v)\bigr),
      & \mathcal{V}(v) < u < v + 1 - 2\lambda,
  \end{cases}
\end{equation}
for $u$ in the active region $[0,\, v + 1 - 2\lambda)$.
Note that $h(0, v) = v$ and $h(v + 1 - 2\lambda, v) = 0$.

Finally, we rescale back to triangle coordinates.
For a point $(s, y)$ in the rescaled triangle
(where $s = S/n$ and $y = Y/n$),
the corresponding cylinder coordinates are
\begin{equation}\label{eq:rescaling}
  u = \frac{y}{1+x}, \qquad v = \frac{s - (1-x)}{1+x},
\end{equation}
and the rescaling factor is $M/n \to 1 + x$ as $n \to \infty$.
In particular, the top row $s = 1$ corresponds to $v = \lambda = x/(1+x)$,
and $y \in [0, 1]$ maps to $u \in [0,\, 1-\lambda]$.
The uncolored height function rescales as
\begin{equation}\label{eq:height_rescaling}
  \frac{1}{n}\, H_X\!\bigl(\lfloor sn \rfloor,\, \lfloor yn \rfloor\bigr)
  \;\xrightarrow{\;n\to\infty\;}\;
  (1+x)\, h\!\bigl(\tfrac{y}{1+x},\, \tfrac{s-(1-x)}{1+x}\bigr).
\end{equation}

\subsection{Main theorem}
\label{sec:main_thm}

\begin{theorem}[Main theorem]
\label{thm:main}
Fix $q \in (0,1)$ and $p \in (0,1)$.
Let $\sigma_n \in S_n$ be the random permutation obtained by the
$q$-Demazure product with deletion probability $1-p$ applied to the
standard reduced decomposition of~$w_0$.
Then the sequence of permutons $\mu_{\sigma_n}$ converges weakly,
in probability, to a deterministic permuton $\mu_{p,q}$.
If
\begin{equation}
\label{eq:adjusted_p}
  p' \;=\; \frac{p(1-q)}{1-qp}.
\end{equation}
then $\mu_{p,q}$ coincides with the limiting permuton from
\cite{GrothendieckShenanigans2024} for the Demazure product ($q=0$)
with parameter~$p'$.
\end{theorem}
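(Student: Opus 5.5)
The proof assembles the results of \cref{sec:hydro,sec:main} in three steps: convergence of the height functions, passage to the permuton, and identification with $\mu_{p'}$.

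\emph{Step 1: convergence of height functions.} For each fixed threshold $x\in(0,1)$ with $X=\lfloor xn\rfloor$, the sandwich \eqref{eq:sandwich_rescaled} shows that the color-forgotten height function $\frac1n H_X(\lfloor sn\rfloor,\lfloor yn\rfloor)$ converges in probability to the deterministic limit \eqref{eq:height_rescaling}. The sandwich comes from \cref{lem:monotone_coupling}, the hydrodynamic limit of \cite{aggarwal2020limit} together with \cref{thm:entropy_solution}, and the BCG limit. Two points need care here.
\begin{enumerate}[(a)]
\item The sandwich must hold at the level of random discrete height functions, uniformly in $U$, before any limit is taken. Aggarwal's theorem gives only integrated densities. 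We convert this to pointwise height convergence using monotonicity of $U\mapsto H$ and $1$-Lipschitzness in $U$: a space-time averaged statement plus Lipschitz bounds in both $U$ and $V$ yields uniform convergence.
\item The $\varepsilon$-level discrepancies between $X/M$ and $\lambda$, and between $(n-X)/M$ and $1-2\lambda$, are absorbed by continuity of the limit profile in $\lambda$.
\end{enumerate}
At the top row $s=1$, \cref{def:colored_height} relates $H_X(n,Y)$ to $H_{\sigma_n}((Y-1)/n,X/n)$. We obtain pointwise convergence in probability of $H_{\sigma_n}(y,x)$ for every $(y,x)$ in a dense countable set. Since permutation height functions are monotone in each variable and $1$-Lipschitz (after normalization), this upgrades to uniform convergence on $[0,1]^2$ in probability. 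The cases $x\in\{0,1\}$ are trivial.

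\emph{Step 2: permuton convergence.} Weak convergence of permutons is equivalent to uniform convergence of the distribution functions $\mu([0,y]\times[x,1])$, and $H_{\sigma_n}$ differs from this by $O(1/n)$. So $\mu_{\sigma_n}\to\mu_{p,q}$ weakly in probability, where $\mu_{p,q}$ is the measure whose height function is the limit $\mathcal H(y,x)=(1+x)h(\cdot)-\dots$ from Step 1. This limit is automatically a permuton, since it is a limit of permutons and the permuton space is closed.

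\emph{Step 3: identification with $\mu_{p'}$.} This step carries the substance of part (ii). The limit depends on $(p,q)$ only through $\kappa=(1-qp)/(1-p)$: every quantity in \eqref{eq:def_curve}, \eqref{eq:hbcg_explicit} and \eqref{eq:h_refl} is a function of $\kappa$, $x$ and $\lambda$. At $q=0$ the same derivation applies verbatim, since none of the coupling arguments required $q>0$. It produces the permuton of \cite{GrothendieckShenanigans2024} with $\kappa_0=1/(1-p')$. I expect to either cite their explicit formula and compare directly, or, more robustly, note that their limit is characterized as the $q=0$ case of our formula. Solving $1/(1-p')=(1-qp)/(1-p)$ gives $1-p'=(1-p)/(1-qp)$, i.e.\ $p'=p(1-q)/(1-qp)$, which is \eqref{eq:adjusted_p}. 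Uniqueness of weak limits then gives $\mu_{p,q}=\mu_{p'}$.

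I expect the main obstacle to be Step 1(a). The hydrodynamic inputs are space-time averaged and come with fluctuations, while the collapse of the sandwich uses \cref{cor:arrow_conservation} exactly at the shock curve. One must verify that the density-$1$ region between $\mathcal V(v)$ and the reflecting boundary is forced by the conservation $h^{act}(0,v)=v$ at the discrete level, uniformly in $v$. I would handle this by proving the sandwich for the discrete heights with $o(n)$ errors at all levels simultaneously, using the union bound over $O(n^2)$ points and the tightness afforded by Lipschitz bounds.
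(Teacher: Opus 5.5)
Your proposal follows the paper's proof. It reduces permuton convergence to convergence in probability of the top-row color-forgotten heights $\frac1n H_X(n,\lfloor yn\rfloor)$, obtains these from the sandwich of \cref{lem:monotone_coupling} collapsing via \cref{cor:arrow_conservation}, and identifies the limit with $\mu_{p'}$ by matching $\kappa=(1-qp)/(1-p)$ with $1/(1-p')$. Your Steps 1(a) and 2 spell out the passage from Aggarwal's integrated statement to pointwise and then uniform convergence, which the paper leaves implicit.

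Two small remarks.

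\emph{Step 3.} The ``verbatim at $q=0$'' option is the less robust one, not the more robust one. Aggarwal's hydrodynamic theorem is stated for strictly positive weights, and at $q=0$ the straight-through weight $qp$ vanishes. You should therefore rely on direct comparison with the explicit formula of \cite{GrothendieckShenanigans2024}, as the paper does.

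\emph{The ``main obstacle''.} The density-one region you flag is harmless. For $u>\mathcal V(v)$ the double-step lower bound gives $h^{dstep}(u,v)-(2\lambda-v)=v+1-2\lambda-u$. This already equals the trivial upper bound coming from at most one arrow per site of the active region, so no separate discrete conservation argument is needed there.
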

\begin{proof}
Fix $(x,y) \in (0,1)^2$ and set $X = \lfloor xn \rfloor$.
By the color-forgetting property, the uncolored height function
$H_X(n, \lfloor yn \rfloor)$ counts arrows with color $\le X$
at positions $Y' \ge \lfloor yn \rfloor$ on the top row $S = n$.
Since each position $Y'$ carries exactly one color, we have
\[
  \frac{1}{n}\, H_X(n, \lfloor yn \rfloor)
  \;=\; \frac{1}{n}\,\#\bigl\{\, i \ge \lfloor yn \rfloor : \sigma(i) \le X \,\bigr\}
  \;=\; x - y + H_\sigma(y, x) + O(1/n),
\]
where $H_\sigma$ is the permutation height function from \cref{def:intro_h_perm};
the second step uses
$\#\{i \ge Y : \sigma(i) \le X\} = X - (Y-1) + \#\{i \le Y-1 : \sigma(i) > X\}$.
Thus convergence of permutons reduces
to showing that for every $(x, y) \in (0,1)^2$, the rescaled height
function $\frac{1}{n}\, H_{\lfloor xn \rfloor}\!\bigl(n,\, \lfloor yn \rfloor\bigr)$ converges in probability.

Set $\lambda = x/(1+x)$.
By \eqref{eq:height_rescaling} evaluated at $s = 1$
(so $v = \lambda$ and $u = y/(1+x)$), this converges to
$(1+x)\, h\!\bigl(\tfrac{y}{1+x},\, \lambda \bigr)$,
where $h$ is the limiting active height function from \eqref{eq:h_refl}.
Substituting \eqref{eq:h_refl} and \eqref{eq:hbcg_explicit} with $v = \lambda = x/(1+x)$ and $u = y/(1+x)$, we obtain the permutation height function
$H_\sigma(y,x) = (1+x)\, h\!\bigl(\tfrac{y}{1+x},\, \lambda \bigr) - x + y$, where $\kappa = (1-qp)/(1-p)$.

If $x \le \kappa^{-1}$, then $\mathcal{V}(\lambda) = 1-\lambda$, so
$h(\cdot,\lambda) = h_{BCG}(\cdot,\lambda)$ on the whole top row, and
\[
  H_\sigma(y,x) \;=\;
  \begin{cases}
    0, & y \le x/\kappa, \\[6pt]
    \displaystyle \frac{\bigl(\sqrt{\kappa y} - \sqrt{x}\bigr)^2}{\kappa - 1},
      & x/\kappa < y \le x\kappa, \\[8pt]
    y - x, & y > x\kappa.
  \end{cases}
\]

If $x > \kappa^{-1}$, then the shock has detached by time $\lambda$, and
\[
  (1+x)\,\mathcal{V}\!\left(\tfrac{x}{1+x}\right)
  \;=\;
  \frac{\bigl(\sqrt{x} + \sqrt{(1-x)(\kappa-1)}\bigr)^2}{\kappa}.
\]
Using \eqref{eq:h_refl}, \eqref{eq:hbcg_explicit}, and \eqref{eq:arrow_conservation} at $v=\lambda$, we obtain
\[
  H_\sigma(y,x) \;=\;
  \begin{cases}
    0, & y \le x/\kappa, \\[6pt]
    \displaystyle \frac{\bigl(\sqrt{\kappa y} - \sqrt{x}\bigr)^2}{\kappa - 1},
      & x/\kappa < y \le (1+x)\,\mathcal{V}\!\left(\tfrac{x}{1+x}\right), \\[10pt]
    1 - x, & y > (1+x)\,\mathcal{V}\!\left(\tfrac{x}{1+x}\right).
  \end{cases}
\]

The function $h$ depends on the S6V weights only through
$\kappa = (1-qp)/(1-p)$.
Rewriting the limiting height function from
\cite{GrothendieckShenanigans2024} for the Demazure product ($q=0$) with retention probability $p'$,
one sees that it depends on $p'$ only through
$\kappa' = 1/(1-p')$.
Setting $\kappa' = \kappa$ yields
\[
  \frac{1}{1-p'} = \frac{1-qp}{1-p}
  \qquad\Longleftrightarrow\qquad
  p' = \frac{p(1-q)}{1-qp},
\]
confirming \eqref{eq:adjusted_p}.
Since $h$ depends on $p$ and $q$ only through~$\kappa$,
and this~$\kappa$ coincides with the one for the $q=0$ model at parameter~$p'$,
the limiting permuton is the same. We therefore set
\[
  \mu_{p,q} := \mu_{p'}.
\]

Since the above convergence holds for every
$(x,y) \in (0,1)^2$, the permutons $\mu_{\sigma_n}$ converge weakly
to $\mu_{p,q}$ in probability
(since convergence of $H_\sigma$ determines the permuton; see \cite{hoppen2013limits}).
\end{proof}

\bibliography{qdem_bib}

@article{aggarwal2020limit,
	author = {Aggarwal, A.},
	journal = {Commun. Math. Phys.},
	note = {arXiv:1902.10867 [math.PR]},
	number = {1},
	pages = {681--746},
	publisher = {Springer},
	title = {{Limit Shapes and Local Statistics for the Stochastic Six-Vertex Model}},
	volume = {376},
	year = {2020}}

@article{BCG6V,
	author = {Borodin, A. and Corwin, I. and Gorin, V.},
	journal = {Duke Math. J.},
	note = {arXiv:1407.6729 [math.PR]},
	number = {3},
	pages = {563-624},
	title = {{Stochastic six-vertex model}},
	volume = {165},
	year = {2016}}

@article{borodin2019shift,
	author = {Borodin, A. and Gorin, V. and Wheeler, M.},
	doi = {10.1112/plms.12427},
	issue = {2},
	journal = {Proc. Lond. Math. Soc.},
	note = {arXiv:1912.02957 [math.PR]},
	pages = {182-299},
	title = {Shift-invariance for vertex models and polymers},
	url = {https://doi.org/10.1112/plms.12427},
	volume = {124},
	year = {2022}}

@article{BorodinBufetov2021ColorPosition,
	author = {Borodin, A. and Bufetov, A.},
	doi = {10.1214/20-AOP1463},
	journal = {Ann. Probab.},
	note = {arXiv:1905.04692 [math.PR]},
	number = {4},
	pages = {1607--1632},
	title = {{Color-position symmetry in interacting particle systems}},
	volume = {49},
	year = {2021}}

@article{bufetov2020interacting,
	author = {Bufetov, A.},
	journal = {arXiv preprint},
	note = {arXiv:2003.02730 [math.PR]},
	title = {Interacting particle systems and random walks on {H}ecke algebras},
	year = {2020}}

@article{Defant2024,
	author = {Defant, C.},
	journal = {arXiv preprint},
	note = {arXiv:2408.05182 [math.PR]},
	title = {{Random Subwords and Pipe Dreams}},
	year = {2024}}

@article{defant2025permutons,
	author = {Defant, C.},
	journal = {arXiv preprint},
	note = {arXiv:2505.15630 [math.PR]},
	title = {{Permutons from Demazure Products}},
	year = {2025}}

@article{galashin2020symmetries,
	author = {Galashin, P.},
	doi = {10.1214/20-AOP1502},
	journal = {Ann. Probab.},
	note = {arXiv:2003.06330 [math.PR]},
	number = {5},
	pages = {2175-2219},
	title = {Symmetries of stochastic colored vertex models},
	url = {https://doi.org/10.1214/20-AOP1502},
	volume = {49},
	year = {2021}}

@article{GrothendieckShenanigans2024,
	author = {Morales, A.H. and Panova, G. and Petrov, L. and Yeliussizov, D.},
	journal = {arXiv preprint},
	note = {arXiv:2407.21653 [math.PR]. To appear in Adv. Math.},
	title = {{Grothendieck Shenanigans: Permutons from pipe dreams via integrable probability}},
	year = {2024}
}

@article{GwaSpohn1992,
	author = {Gwa, L.-H. and Spohn, H.},
	journal = {Phys. Rev. Lett.},
	number = {6},
	pages = {725--728},
	title = {Six-vertex model, roughened surfaces, and an asymmetric spin {H}amiltonian},
	volume = {68},
	year = {1992}}

@article{hoppen2013limits,
	author = {Hoppen, C. and Kohayakawa, Y. and Moreira, C. G. and R{\'a}th, B. and Sampaio, R. M.},
	doi = {10.1016/j.jctb.2012.09.003},
	journal = {J. Comb. Theory Ser. B},
	note = {arXiv:1103.5844 [math.CO]},
	number = {1},
	pages = {93--113},
	title = {{Limits of permutation sequences}},
	volume = {103},
	year = {2013}}

@article{kruzkov1970first,
  title={{First order quasilinear equations in several independent variables}},
  author={Kru{\v{z}}kov, Stanislav N},
  journal={Mathematics of the USSR-Sbornik},
  volume={10},
  number={2},
  pages={217--243},
  year={1970},
  publisher={IOP Publishing}
}

@article{lascoux1997flag,
	author = {Lascoux, A. and Leclerc, B. and Thibon, J.-Y.},
	journal = {Letters in Mathematical Physics},
	number = {1},
	pages = {75--90},
	publisher = {Springer},
	title = {{Flag varieties and the Yang-Baxter equation}},
	volume = {40},
	year = {1997}}

\end{document}